\newtheorem{theorem}{Theorem}
\newtheorem{lemma}[theorem]{Lemma}
\newtheorem{proposition}[theorem]{Proposition}
\newtheorem{cor}[theorem]{Corollary}
\newcommand{\ol} {\overline}
\newcommand{\Si} {\Sigma}
\newcommand{\de}{\delta}
\newcommand{\mc}{multilinear commutator word}
\author{Eloisa Detomi}
\address{Dipartimento di Matematica, Universit\`a di Padova, 
 Via Trieste 63, 35121 Padova , Italy}
\email{detomi@math.unipd.it}
\author{Marta Morigi}
\address{Dipartimento di Matematica, Universit\`a di Bologna,
Piazza di Porta San Donato 5, 40126 Bologna, Italy}
\email{marta.morigi@unibo.it}
\author{Pavel Shumyatsky}
\address{Department of Mathematics, University of Brasilia,
Brasilia-DF, 70910-900 Brazil}
\email{pavel@unb.br}
\title{On countable coverings of word values in profinite groups}
 \thanks{2010 {\it Mathematics Subject Classification. } 20E18; 20F12; 20F14.}
 \thanks{Keywords: Profinite groups; Coverings; Verbal subgroups; Commutators}
\begin{document}

\begin{abstract} We prove the following results. Let $w$ be a multilinear commutator word. If $G$ is a profinite group in which all $w$-values are contained in a union of countably many periodic subgroups, then the verbal subgroup $w(G)$ is locally finite. If $G$ is a profinite group in which all $w$-values are contained in a union of countably many subgroups of finite rank, then the verbal subgroup $w(G)$ has finite rank as well. As a by-product of the techniques developed in the paper we also prove that if $G$ is a virtually soluble profinite group in which all $w$-values have finite order, then $w(G)$ is locally finite and has finite exponent.
\end{abstract}
\maketitle

\section{Introduction}

Let $w$ be a group-word in $n$ variables, and let $G$ be a group. The verbal subgroup $w(G)$ of $G$
determined by the word $w$ is the subgroup generated by the set consisting of all values $w(g_1,\ldots,g_n)$, where $g_1,\ldots,g_n$ are elements of $G$. In the present paper we deal with the so called multilinear commutators (otherwise known under the name of outer commutator words). These are words which are obtained by nesting commutators, but using always different variables. Thus the word $[[x_1,x_2],[x_3,x_4,x_5],x_6]$ is a multilinear commutator while the Engel word $[x_1,x_2,x_2,x_2]$ is not. An important family of multilinear commutators are the simple commutators $\gamma_k$, given by $$\gamma_1=x_1, \qquad \gamma_k=[\gamma_{k-1},x_k]=[x_1,\ldots,x_k].$$
The corresponding verbal subgroups $\gamma_k(G)$ are the terms of the lower central series of $G$. Another distinguished sequence of multilinear commutator words is formed by the derived words $\delta_k$, on $2^k$ variables, which are defined recursively by
$$\delta_0=x_1,\qquad \delta_k=[\delta_{k-1}(x_1,\ldots,x_{2^{k-1}}),\delta_{k-1}(x_{2^{k-1}+1},\ldots,x_{2^k})].$$
Of course $\delta_k(G)=G^{(k)}$, the $k$th derived subgroup of $G$. 

In recent years the situation where the set of $w$-values in $G$ is covered by finitely many subgroups has been given
certain attention.

In this direction we mention the following result that was obtained in \cite{RS}. Let $w$ be either the
lower central word $\gamma_k$ or the derived word $\delta_k$. Suppose that $G$ is a group in which  all
$w$-values are contained in a union of finitely many Chernikov subgroups.  Then $w(G)$ is Chernikov.  Recall
that a group is Chernikov if and only if it is a finite extension of a direct sum of finitely many Pr\"ufer groups
$C_{p^\infty}$.

Another result of this nature was established in \cite{FS}: If $G$ is a group in which  all commutators are
contained in a union of finitely many cyclic subgroups, then $G'$ is either cyclic or finite. Later  Cutulo
and  Nicotera  showed that if $G$ is a group in which  all $\gamma_k$-values are contained in a union of 
finitely many cyclic subgroups, then $\gamma_k(G)$ is finite-by-cyclic. They also showed that $\gamma_k(G)$ can
be neither cyclic nor finite \cite{CN}.

The paper \cite{AS} deals with profinite groups in which all $w$-values are contained in a union of finitely
many subgroups with certain prescribed properties.

A profinite group is a topological group that is isomorphic
to an inverse limit of finite groups.  The textbooks \cite{ribes-zal}  and \cite{book:wilson} provide a good introduction to
the theory of profinite groups.  In the context of profinite groups all the usual concepts of group theory are
interpreted topologically. In particular, in a profinite group the verbal subgroup corresponding to the word $w$
is the {\it closed} subgroup generated by all $w$-values. More generally, throughout this paper by a subgroup of a profinite group we 
always mean a closed subgroup and by a quotient we mean a quotient over a normal closed subgroup. 
The following theorem was proved in \cite{AS}.
\bigskip
  
{\it Let $w$ be a multilinear commutator word and $G$ a profinite group that has finitely many periodic subgroups whose union contains all $w$-values in $G$. Then $w(G)$ is locally finite.}
\bigskip

A group is periodic (torsion) if every element of the group has finite order and  a group 
is called locally finite if each of its finitely generated subgroups is finite. Periodic profinite groups have received a good deal of attention in the past. In particular, using Wilson's reduction theorem  \cite{wilson-torsion}, 
Zelmanov has been able to prove local finiteness of periodic compact
groups \cite{z}.  Earlier Herfort showed that there exist only finitely many primes dividing the orders of elements of a periodic profinite group \cite{he}.  It is a long-standing problem whether any periodic profinite group has finite exponent. Recall that a group $G$ has exponent $e$ if $x^e=1$ for all $x\in G$ and $e$ is the least positive integer with that property.
Another result obtained in \cite{AS} is as follows.
\bigskip
  
{\it Let $w$ be a multilinear commutator word and $G$ a profinite group that has finitely many subgroups of finite rank
whose union contains all $w$-values in $G$. Then $w(G)$ has finite rank.}
\bigskip

Recall that a profinite group is said to be of finite rank $r$ if each subgroup of $G$ can be generated by at most $r$ elements.

In the present paper we study profinite groups in which all $w$-values are covered by countably many subgroups.
In particular, we will prove the following theorems.

\begin{theorem}\label{theoa} Let $w$ be a multilinear commutator word and $G$ a profinite group that has countably
many periodic subgroups whose union contains all $w$-values in $G$. Then $w(G)$ is locally finite.
\end{theorem}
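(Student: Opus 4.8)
The plan is to reduce the countable covering to the finite one already handled in \cite{AS}. Write $k$ for the number of variables of $w$ and let $\varphi\colon G^k\to G$, $\varphi(g_1,\dots,g_k)=w(g_1,\dots,g_k)$, be the (continuous) word map, so that the set $G_w=\varphi(G^k)$ of $w$-values is compact. It suffices to exhibit an \emph{open normal} subgroup $V\le G$ together with finitely many of the $H_i$ whose union contains every $w$-value of $V$: the finite covering theorem of \cite{AS}, applied to $V$ with the periodic subgroups $H_i\cap V$, then yields that $w(V)$ is locally finite, and a finite-index argument carried out at the end propagates this to $w(G)$.

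The passage from countably many to finitely many subgroups is supplied by the Baire category theorem. Since $G^k$ is profinite it is compact Hausdorff, hence a Baire space. Put $D_n=\varphi^{-1}(H_1\cup\dots\cup H_n)$; these sets are closed (preimages of closed sets under $\varphi$), they increase with $n$, and $\bigcup_n D_n=G^k$ because every $w$-value lies in some $H_i$. By Baire some $D_N$ has nonempty interior, so it contains a basic open set $g_1M\times\dots\times g_kM$, where $M$ is an open normal subgroup of $G$ and $g_1,\dots,g_k\in G$. In other words, every $w$-value $w(x_1,\dots,x_k)$ with $x_i\in g_iM$ lies in the finite union $H_1\cup\dots\cup H_N$ of periodic subgroups.

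The main obstacle is to convert this information, attached to the \emph{shifted} box $g_1M\times\dots\times g_kM$, into a statement about all $w$-values of a genuine subgroup. Here the multilinear commutator structure of $w$ is essential: expanding $w(g_1m_1,\dots,g_km_k)$ by the standard identities $[ab,c]=[a,c]^b[b,c]$ and their analogues in each slot of $w$ writes the $w$-values of a suitable open subgroup as products of boundedly many conjugates of the $w$-values living in the box. The delicate point is then to keep control of these conjugates: one must pass to a further open normal subgroup and exploit that each $H_i$ is periodic, so that by Herfort's theorem only finitely many primes occur among their elements and by Zelmanov's theorem each $H_i$ is locally finite. It is precisely this periodicity that prevents the conjugates from spreading over infinitely many or non-periodic pieces, and the careful commutator bookkeeping that absorbs the coset representatives $g_i$ while keeping the covering finite and periodic is the technical heart of the proof; its output is an open normal subgroup $V$ all of whose $w$-values are contained in finitely many periodic subgroups.

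With such a $V$ in hand, \cite{AS} gives that $w(V)$ is locally finite. Since $V$ is open and normal in $G$, the subgroup $w(V)$ is normal in $w(G)$, and $[w(G):w(V)]$ is finite because for a multilinear commutator word the verbal subgroup of a finite-index subgroup has finite index in $w(G)$. A locally finite group is periodic, so $w(G)/w(V)$ finite together with $w(V)$ periodic forces $w(G)$ to be periodic; by Zelmanov's theorem on periodic profinite groups, $w(G)$ is then locally finite, as required.
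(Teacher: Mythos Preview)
Your outline has the right opening move (Baire on the word map), but the two steps you flag as routine are in fact where the real work lies, and neither is carried out.

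\medskip
\textbf{The ``unshifting'' step.} You assert that the commutator identities let you write every $w$-value of some open $V$ as a product of boundedly many conjugates of box values $w(g_1m_1,\dots,g_km_k)$. Granting this, each factor lies in some conjugate $H_i^c$, which is indeed periodic; but the conjugating elements $c$ depend on $m_1,\dots,m_k$ and range over all of $M$, so you are looking at products of elements drawn from \emph{infinitely many} periodic subgroups. A product set $H_{i_1}^{c_1}\cdots H_{i_r}^{c_r}$ is not a subgroup, and the subgroup it generates need not be periodic. Neither Herfort's finiteness of primes nor Zelmanov's local finiteness of the individual $H_i$ addresses this: those results say nothing about products across distinct (and varying) periodic subgroups. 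You describe this as ``the technical heart of the proof'' and then do not prove it; as stated, it does not go through. The paper sidesteps precisely this difficulty: it applies Baire to the single preimages $\varphi^{-1}(H_i)$ rather than to the unions, so that all box $\delta_k$-values land in \emph{one} periodic subgroup $H_j$; the subgroup they generate is then automatically periodic, and one can factor out its normal closure to reduce to the virtually soluble case (Lemma~\ref{lem:reduction}). The soluble case is then handled by an entirely different mechanism (the combinatorics of Section~2 and Lemma~\ref{lem:sigma}), not by reducing to the finite-cover theorem of \cite{AS}.

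\medskip
\textbf{The finite-index step.} The claim that $[w(G):w(V)]<\infty$ whenever $V$ is open normal is false. Take $G=\langle\sigma\rangle\ltimes\mathbb{Z}_p$ with $p$ odd and $\sigma$ of order $2$ acting by inversion, $V=\mathbb{Z}_p$, $w=[x,y]$: then $w(V)=1$ while $w(G)=\mathbb{Z}_p$ is infinite. So even if you had produced a $V$ with $w(V)$ locally finite, you could not conclude that $w(G)$ is periodic without re-using the covering hypothesis on $G$ itself; your final paragraph does not do this. In the paper this passage from an open subgroup back to $G$ is handled not by an index argument but by the structural reduction to a \emph{perfect} virtually soluble group, followed by induction on the derived length of an open soluble normal subgroup and Mann's theorem on the exponent of $G'$ when $G/Z(G)$ has bounded exponent.
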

\begin{theorem}\label{theob} Let $w$ be a multilinear commutator word and $G$ a profinite group that has countably
many subgroups of finite rank whose union contains all $w$-values in $G$. Then $w(G)$ has finite rank.
\end{theorem}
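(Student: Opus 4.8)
The plan is to reduce the countable covering to a configuration that the finite covering theorem quoted from \cite{AS} can handle, the passage being effected by the Baire category theorem. First I would note that the set of all $w$-values in $G$ is closed: the word map $w\colon G^n\to G$ is continuous and $G^n$ is compact, so its image is a compact, hence closed, subset of $G$, and in particular a Baire space. Writing this set as the countable union of its closed subsets, namely its intersections with the covering subgroups $H_i$, the Baire category theorem produces an index $i_0$ for which the intersection with $H:=H_{i_0}$ has non-empty interior. Concretely, there are an open normal subgroup $N$ of $G$ and a $w$-value $u=w(a_1,\dots,a_n)$ such that every $w$-value lying in the coset $uN$ belongs to $H$, a subgroup of finite rank.

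Next I would exploit that the word map descends to the finite quotient $G/N$: if $g_i\in a_iN$ for every $i$, then $w(g_1,\dots,g_n)\equiv u \pmod N$, so $w(g_1,\dots,g_n)\in uN\subseteq H$. Thus \emph{all} $w$-values whose arguments lie in the fixed system of cosets $a_1N,\dots,a_nN$ are contained in the single finite-rank subgroup $H$. Since the set of $w$-values is invariant under conjugation and $N$ is normal, conjugating by $t\in G$ replaces this system of cosets by $a_1^tN,\dots,a_n^tN$ and $H$ by $H^t$; as the diagonal conjugation action of the finite group $G/N$ on $(G/N)^n$ has finite orbits, finitely many conjugates $H^{t_1},\dots,H^{t_k}$, all of finite rank, together contain every $w$-value whose argument-cosets lie in the orbit of $(a_1N,\dots,a_nN)$.

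The main obstacle is to control the $w$-values attached to the \emph{remaining} systems of cosets modulo $N$ and to assemble everything into a genuinely finite covering to which \cite{AS} applies. Here I would argue by induction, for instance on the index $|G:N|$ together with the structure of $w$. The idea is to pass to the quotient of $G$ by the finite-rank normal subgroup generated by the already-covered values: modulo such a subgroup the contribution of the controlled cosets is neutralised, the relevant word becomes effectively simpler, and a repeated application of the Baire argument in the smaller quotient should cover the outstanding coset systems. Making this precise requires commutator identities relating $w$-values in different cosets, such as expansions of the type $w(a_1m_1,\dots)=w(a_1,\dots)\cdot(\text{lower terms})$ arising from the multilinearity of $w$, together with careful bookkeeping of how the finite rank accumulates along the induction; I expect this to be the technically heaviest part of the argument.

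Finally, once the set of $w$-values is shown to lie in a union of finitely many subgroups of finite rank, the finite covering theorem of \cite{AS} yields that $w(G)$ has finite rank. Throughout I would move freely between $G$ and its open subgroups: since $N$ is open, $w(G)\cap N$ has finite index in $w(G)$, and the property of having finite rank is insensitive to passing to open subgroups or to finite extensions, which both reduces bookkeeping and completes the deduction.
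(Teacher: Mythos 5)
There is a genuine gap, and it sits exactly where you place your ``main obstacle''. Your opening Baire step is fine (and close in spirit to the first half of the paper's Lemma \ref{lem:reduction}): the set of $w$-values is compact, hence a Baire space, and one obtains a coset system $a_1N,\dots,a_nN$ all of whose $w$-values land in a single finite-rank subgroup $H$, together with the finitely many conjugate systems handled by $H^{t_1},\dots,H^{t_k}$. (Minor slip: you write $uN\subseteq H$; what is true is that every $w$-value in $uN$ lies in $H$.) But the proposed mechanism for the remaining coset systems does not work. The subgroup generated by the already-covered values lies in $H\cup H^{t_1}\cup\dots\cup H^{t_k}$, yet a subgroup generated by finitely many subgroups of finite rank need not have finite rank (two procyclic subgroups can generate a free pro-$p$ group of rank $2$, which has infinite rank), and its normal closure is worse still. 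So ``pass to the quotient by the finite-rank normal subgroup generated by the already-covered values'' is not an available move; and even granting it, nothing decreases: the quotient has the same index $|G:N|$, the same word $w$, and the same uncontrolled coset systems, so the induction never terminates. Iterating Baire on the leftover systems only shrinks $N$ further and produces an infinite regress rather than the finite covering you need to invoke \cite{AS}.

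The paper does not in fact reduce to the finite-covering theorem. It splits the problem differently: (i) a reduction to virtually soluble $G$ (Lemma \ref{lem:reduction}), where the decisive point after the Baire step is that one can factor out finite normal subgroups to force $\delta_k(a_1H,\dots,a_{2^k}H)=1$ and then quote \cite[Lemma 2.2]{AS} to conclude that $H$ is \emph{soluble}; (ii) the soluble case (Lemma \ref{lem:sigma}), where the induction runs along the derived-series filtration $w(\mathbf{i})$ and Baire is applied not to the set of $w$-values but to the group $G^{(i_{\pi(k+1)})}$ covered by the sets $S_t=\{a : X(a)\subseteq Y_t\}$ --- the inclusions $X(a'a)\subseteq X(a')X(a)$ and $X(a)\subseteq X(a')^{-1}X(a'a)$ supplied by Proposition \ref{skew congruence} are what turn one ``good'' coset $bH$ into control over \emph{all} of $G^{(i_{\pi(k+1)})}$ via a finite transversal, which is precisely the assembly step missing from your sketch; and (iii) the perfect virtually soluble case, which needs Lemma \ref{lem:unico} (every element of $[M,G]$ is a product of finitely many $\delta_k$-values) so that the countable covering restricts to a covering of the whole group $[M,G]$ by the subgroups $S_i=\prod_{j\le i}(G_j\cap[M,G])$, followed by the Lubotzky--Mann theorem that finite rank of $G/Z(G)$ forces finite rank of $G'$. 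Your ``commutator identities relating $w$-values in different cosets'' gestures at ingredient (ii), but without the precise multiplicative structure of the sets $X(a)$ the Baire argument cannot be bootstrapped from one coset to the whole group, and that is the heart of the proof.
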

In the particular case where $w=[x,y]$ both above theorems were earlier obtained in \cite{as}. 

It is easy to see that if $w$ is a multilinear commutator word and $G$ is an abstract soluble group in which all $w$-values
have finite order, then $w(G)$ is locally finite (see for example \cite[Lemma 4.2]{S2}). As a by-product of the techniques developed in the present paper we obtain a profinite version of this fact.

\begin{theorem}\label{theoc} Let $w$ be a multilinear commutator word and $G$ a virtually soluble profinite group 
in which all $w$-values have finite order. Then $w(G)$ is locally finite and has finite exponent.
\end{theorem}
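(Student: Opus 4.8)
The plan is to prove the sharper assertion that $w(G)$ has finite exponent; local finiteness then comes for free, since a profinite group of finite exponent is periodic and hence locally finite by Zelmanov's theorem \cite{z}. I record two facts at the outset. First, the set $G_w$ of $w$-values is \emph{closed} in $G$, being the image of the compact space $G^n$ under the continuous word map $w\colon G^n\to G$; this is what will let topology act on the orders of $w$-values. Second, I will use repeatedly that finite exponent is inherited through extensions and through finite-index overgroups: if $N\trianglelefteq H$ with $\exp N$ and $\exp(H/N)$ finite then $\exp H$ divides $\exp N\cdot\exp(H/N)$, and if $T$ has finite index in $H$ then $\exp H$ divides $|H:T|\cdot\exp T$. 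In particular, once I show that $w(G)$ is periodic, finite exponent will follow.

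Granting periodicity, here is the clean assembling step. The closed subgroup $w(G)$ is virtually soluble, so it has an open normal pro-soluble subgroup $T$, and by the remarks above it suffices to bound $\exp(T)$. I argue by induction on the derived length of $T$. The base case is the fact that an abelian periodic profinite group $B$ has finite exponent: by Herfort's theorem \cite{he} only finitely many primes divide the orders of its elements, and for each such prime $p$ the closed subgroups $\{x\in B: x^{p^n}=1\}$ form an increasing chain with union the $p$-component, so the Baire category theorem forces one of them to be open, whence that component, and therefore $B$, has finite exponent. Applying this to the last nontrivial term of the derived series of $T$ and climbing back up by the extension bound yields $\exp(T)<\infty$. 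Thus the whole problem reduces to the periodicity of $w(G)$.

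To prove $w(G)$ periodic I would show that the $w$-values have \emph{bounded} order, which is genuinely stronger than the standing hypothesis that each has finite order. The qualitative backbone is the abstract result that in a soluble group all of whose $w$-values have finite order the verbal subgroup is locally finite \cite[Lemma 4.2]{S2}: this shows that the abstract subgroup generated by the $w$-values is periodic. Its closure $w(G)$ need not be periodic, however, since the closure of a torsion subgroup of an abelian profinite group can contain elements of infinite order; this is the crux that closedness of $G_w$ is meant to repair. Writing $G_w=\bigcup_m\{g\in G_w: g^{m!}=1\}$ as an increasing union of closed sets and applying the Baire category theorem produces a nonempty relatively open subset of $G_w$ on which the order is bounded. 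The point of a bound is that it transfers cleanly to verbal subgroups inside abelian sections: in an abelian group the subgroup generated by elements of order dividing $N$ has exponent dividing $N$, and $\{x:x^N=1\}$ is closed, so the closure inherits the bound. Section by section this would force each abelian factor of the derived series of $w(G)$ to have finite exponent, hence $w(G)$ periodic.

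The main obstacle is exactly the propagation of the local bound furnished by Baire's theorem to the uniform bound needed across all of $G_w$: order is not continuous on $G$, so a priori the values of unbounded order could accumulate at a point possessing no neighbourhood of bounded order, and the abstract local finiteness alone does not preclude this. Surmounting it is where the structural techniques developed for Theorems~\ref{theoa} and~\ref{theob} enter—using the virtually soluble structure and an induction on the derived length to organise $G_w$ so that, in every abelian section of $w(G)$, the relevant commutator values are controlled by a single Baire bound. Once this uniformity is in hand, the section-by-section exponents assemble, via the extension estimate of the first paragraph, into a finite exponent for $w(G)$; periodicity follows, and with it local finiteness by \cite{z}. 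The virtually soluble (rather than soluble) hypothesis is absorbed throughout by the finite-index clauses of the extension bound.
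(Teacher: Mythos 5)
Your reduction is sound as far as it goes: $w(G)$ is a closed subgroup of the virtually soluble group $G$, so once it is known to be periodic, finite exponent follows exactly as in your second paragraph (this is Lemma \ref{lem:torsion} of the paper), and local finiteness comes with it. But the entire substance of the theorem is the periodicity of $w(G)$, and that is precisely the step you leave open. Your fourth paragraph names the obstacle --- promoting the local bound $g^{m!}=1$, valid on a nonempty relatively open subset of the closed set $G_w$, to a uniform bound on all $w$-values --- and then defers to ``the structural techniques developed for Theorems \ref{theoa} and \ref{theob}'' without supplying them. That deferral is the proof. The paper's mechanism is quite specific: Baire's theorem is applied not in the value space $G_w$ but in the argument space, to the cover of $G^{(i_{\pi(k+1)})}$ by the closed sets $S_t=\{a : X(a)\subseteq Y_t\}$, and the resulting open neighbourhood is spread to the whole of $G^{(i_{\pi(k+1)})}$ via the translation inclusions $X(a'a)\subseteq X(a')X(a)$ and $X(a)\subseteq X(a')^{-1}X(a'a)$, which rest on Proposition \ref{skew congruence} and the commutator calculus of Section 2. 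Nothing in your sketch replaces this.

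There is a second, independent problem with your ``section by section'' plan: the abelian factors of the derived series of $w(G)$ are not generated by images of $w$-values of $G$, so even a uniform bound on the orders of $w$-values would not, by the argument you state, bound the exponents of those factors. The filtration that makes the induction work is not the derived series of $w(G)$ but the family $w(\mathbf{i})=w(G^{(i_1)},\ldots,G^{(i_n)})$: each of these subgroups is generated by genuine $w$-values of $G$ (so the finite-order hypothesis applies to its generators), a maximal nontrivial one is abelian by Corollary \ref{cor:ab}, and there one takes $Y_s$ to be the subgroup generated by the $w$-values of order dividing $s$ and runs the machinery of Lemma \ref{lem:sigma}; this is the content of Lemma \ref{prop:finite-order}. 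Finally, the passage from soluble to virtually soluble is not ``absorbed by finite-index clauses'': the paper reduces to $G$ perfect and virtually soluble, uses Lemma \ref{lem:unico} to write every element of $[M,T]$ as a product of finitely many $\delta_k$-values of finite order lying in the abelian group $[M,T]$, and closes the induction with Mann's theorem applied to $G/Z(G)$. None of these steps is routine, and none appears in your proposal.
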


It seems an interesting open problem whether the assumption that $G$ is virtually soluble can be dropped from the
hypothesis of Theorem \ref{theoc}. In other words, let $G$ be a profinite group in which all $w$-values have finite
order. Is $w(G)$ necessarily locally finite? 

A close inspection of the proof of Theorem \ref{theoa} reveals that if $G$ is a profinite group that has countably
many subgroups of finite exponent whose union contains all $w$-values, then $w(G)$ has finite exponent as well. It
seems plausible that if $G$ is a profinite group in which all $w$-values are contained in a union of countably many subgroups of finite exponent dividing $e$, then $w(G)$ has an open subgroup of finite exponent dividing $e$. We were unable to prove the latter statement, though. Similarly, if $G$ is a profinite group in which all $w$-values are contained in a union of countably many subgroups of finite rank at most $r$, we conjecture that $w(G)$ necessarily has an open subgroup of rank at most $r$.

In the next section we develop some technical tools which will be of crucial importance in the proofs of the main results.  The proofs will be given in Section 3.

\section{Combinatorics of commutators}

Throughout this section, $G$ will be an abstract group. Some results given here were obtained in an unpublished work by the second author and G.\ Fern\'andez-Alcober.

\begin{lemma}
\label{factor goes to modulus}
Let $w=w(x_1,\ldots,x_n)$ be a \mc\ on $n$ variables, and let 
$k\in\{1,\ldots,n\}$. Let $M, A_1, \ldots, A_n$ be normal subgroups of a group $G$ such that 
 $w(A_1,\ldots,A_{k-1},M,A_{k+1},\ldots,A_n)=1$
 and choose elements  $a_i\in  A_i$ for $i=1,\ldots,n$, and  $m \in M$. 
Then
\[
w(a_1,\ldots,a_{k-1},a_km_k,a_{k+1},\ldots,a_n)=
w(a_1,\ldots,a_{k-1},a_k,a_{k+1},\ldots,a_n).
\]
\end{lemma}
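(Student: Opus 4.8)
The plan is to argue by induction on the number $n$ of variables of $w$ (equivalently, on the complexity of the nested commutator structure). If $n=1$ then $w=x_1$, the hypothesis $w(M)=M=1$ forces $M=1$, hence $m=1$ and there is nothing to prove. For $n>1$ I would write $w=[u,v]$, where $u=u(x_1,\ldots,x_j)$ and $v=v(x_{j+1},\ldots,x_n)$ are \mc s on disjoint, strictly smaller sets of variables. By symmetry I may assume the distinguished variable $x_k$ occurs in $u$, so $1\le k\le j$; the case $x_k$ in $v$ is entirely analogous, using the dual identity $[x,yz]=[x,z][x,y]^z$ in place of $[xy,z]=[x,z]^y[y,z]$. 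Set $U=u(a_1,\ldots,a_k,\ldots,a_j)$, $U'=u(a_1,\ldots,a_km,\ldots,a_j)$ and $V=v(a_{j+1},\ldots,a_n)$, so that the two sides of the claimed identity are $[U',V]$ and $[U,V]$.

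Next I would introduce the two subgroups
\[
K=u(A_1,\ldots,A_{k-1},M,A_{k+1},\ldots,A_j),\qquad L=v(A_{j+1},\ldots,A_n),
\]
and record that both are normal in $G$: conjugating a $u$-value having $M$ in its $k$th slot produces another such value, since $A_1,\ldots,A_j$ and $M$ are all normal, and likewise for $v$-values. The key technical fact to establish here is the identity $[K,L]=w(A_1,\ldots,A_{k-1},M,A_{k+1},\ldots,A_n)=1$. Its proof uses the standard description of $[K,L]$ as the normal closure in $\langle K,L\rangle$ of the commutators of generators: writing $S,T$ for the generating sets (the $u$-values with $M$, and the $v$-values), both $S$ and $T$ are closed under $G$-conjugation, so $[s,t]^g=[s^g,t^g]$ is again of the form $[s',t']$; hence $[K,L]=\langle[s,t]:s\in S,\,t\in T\rangle$, and the right-hand side is exactly the verbal subgroup that vanishes by hypothesis.

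I would then show $U'U^{-1}\in K$ by invoking the inductive hypothesis for the shorter word $u$. Passing to $\ol G=G/K$, the images $\ol A_i,\ol M$ are normal and $u(\ol A_1,\ldots,\ol M,\ldots,\ol A_j)=1$, because every $u$-value with $M$ in the $k$th slot already lies in $K$. As $u$ has $j<n$ variables, the inductive hypothesis applies in $\ol G$ and gives $\ol{U'}=\ol U$, that is, $U'U^{-1}\in K$. Finally I would set $Y=U'U^{-1}\in K$ and compute, via $[xy,z]=[x,z]^y[y,z]$ with $x=Y$, $y=U$, $z=V$,
\[
[U',V]=[YU,V]=[Y,V]^{U}\,[U,V].
\]
Since $Y\in K$ and $V\in L$, the identity $[K,L]=1$ yields $[Y,V]=1$, whence $[U',V]=[U,V]$, as required.

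The step I expect to be the main obstacle is less a calculation than the choice underlying the last display: the correction factor $Y$ must be placed on the \emph{left} of $U$, so that the commutator identity produces $[Y,V]^{U}$, which is annihilated by $[K,L]=1$. Writing instead $U'=Uc$ with $c=U^{-1}U'\in K$ gives $[U',V]=[U,V]^{c}$, and removing the conjugation by $c$ would require $c$ to centralize $[U,V]$ — a condition the hypotheses do not provide. Getting this asymmetry right, in tandem with the reduction modulo $K$ and the generation identity $[K,L]=w(A_1,\ldots,M,\ldots,A_n)$, is the heart of the argument.
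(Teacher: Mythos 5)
Your proof is correct and follows essentially the same route as the paper's: decompose $w=[u,v]$, use the inductive hypothesis (in the quotient by $K=u(A_1,\ldots,M,\ldots,A_j)$) to write the perturbed $u$-value as $YU$ with $Y\in K$, and then kill $[Y,V]^U$ via the identity $[YU,V]=[Y,V]^U[U,V]$ together with $[K,L]=w(A_1,\ldots,M,\ldots,A_n)=1$. The only difference is that you spell out explicitly the generation argument for $[K,L]$ and the passage to the quotient, which the paper leaves implicit.
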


\begin{proof}
If $n=1$, the result is obvious. 
Assume that $n>1$ and write $w(x_1,\dots,x_n)=[\varphi(x_1,\dots,x_r),\psi(x_{r+1},\dots,x_n)
]$ where 
  $\varphi$ and $\psi$ are \mc s.
  
Suppose that $k\le r$. 
For short, let 
\[u=\varphi(a_1,\ldots,a_{k-1},a_k,a_{k+1},\ldots,a_r) \text{ and } 
v=\psi(a_{r+1},\dots,a_n).\]
 Since $ r<n$, by induction we have 
\[
\varphi(a_1,\ldots,a_{k-1},a_km_k,a_{k+1},\ldots,a_r)=
zu,
\]
where $z\in \varphi(A_1,\ldots,A_{k-1},M_k,A_{k+1},\ldots,A_r)$.
Then 
\[
w(a_1,\ldots,a_{k-1},a_km,a_{k+1},\ldots,a_n)=
\]
\[
[\varphi(a_1,\ldots,a_{k-1},a_km,a_{k+1},\ldots,a_r),\psi(a_{r+1},\dots,a_n)]=
[zu,v]=
[z,v]^u[u,v].
\]
 Taking into account that  $[z,v]^{u}
 \in w(A_1,\ldots,A_{k-1},M,A_{k+1},\ldots,A_n)=1$, we deduce that 
\[w(a_1,\ldots,a_{k-1},a_km,a_{k+1},\ldots,a_n)=w(a_1,\ldots,a_{k-1},a_k,a_{k+1},\ldots,a_n)\] 
and the case $k\le r$ is proved.

If $ k > r$,  the result follows from the previous case and the fact that $[y_1,y_2]=
[y_2,y_1]^{-1}$ for every $y_1,y_2\in G$.
\end{proof}

Let $n\geq 1$. We denote by $I$ the set of all $n$-tuples $(i_1,\dots,i_n)$, where all entries $i_k$ are non-negative integers. We will view $I$ as a partially ordered set with the partial order given by the rule that $$(i_1,\dots,i_n)\leq(j_1,\dots,j_n)$$ if and only if $i_1\leq j_1,\dots,i_n\leq j_n$.

Given a  multilinear commutator word  $w=w(x_1,\dots,x_n)$ and  
 $\mathbf{i}=(i_1,\ldots,i_n) \in I$, we write
\[
w(\mathbf{i})=w(G^{(i_1)},\ldots,G^{(i_n)})
\]
 for  the subgroup generated by the $w$-values $w(g_1,\dots,g_n)$ with  $g_{j} \in G^{(i_j)}$. 
Further, set 
\[
w(\mathbf{i^+})=\prod w(\mathbf{j} ),
\]
 where the product is taken over all $\mathbf{j} \in I $ such that $\mathbf{j}>\mathbf{i}$.

\begin{lemma}
 \label{conj} Let $w=w(x_1,\ldots,x_n)$ be a \mc\ and $\mathbf{i} \in I$. For every $j=1,\ldots,n$ choose $a_j\in G^{(i_j)}$. 
 Let $x\in G^{(s)}$ for some integer $s$.
  Then
\[
 w(a_1,\dots,a_n)^x\equiv w(\overline{a}_1,\dots,\overline{a}_n) 
\pmod{ w(\mathbf{i^+})},
\]
where $\overline{a}_k=a_k$ if $ i_k \le s $ and $\overline{a}_k=a_k^x$ otherwise.
\end{lemma}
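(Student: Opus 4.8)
The plan is to derive the congruence from repeated applications of Lemma~\ref{factor goes to modulus} inside the quotient $\overline{G}=G/N$, where $N=w(\mathbf{i^+})$. First I would check that $N$ is normal in $G$: each $G^{(j)}$ is characteristic, so each $w(\mathbf{j})=w(G^{(j_1)},\dots,G^{(j_n)})$ is normal (conjugation sends the generating $w$-value $w(g_1,\dots,g_n)$ to $w(g_1^h,\dots,g_n^h)$, whose arguments still lie in the respective normal subgroups $G^{(j_l)}$), and $N$ is a product of such normal subgroups. I would then rewrite the left-hand side using that conjugation is an automorphism: $w(a_1,\dots,a_n)^x=w(a_1^x,\dots,a_n^x)$. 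The only discrepancy between this and the target $w(\overline{a}_1,\dots,\overline{a}_n)$ occurs at the positions $k$ with $i_k\le s$, where the former has $a_k^x$ and the latter $a_k$.

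For such a $k$, the key arithmetic is that $i_k\le s$ forces $x\in G^{(s)}\le G^{(i_k)}$, while $a_k\in G^{(i_k)}$; hence $m_k:=[a_k,x]\in[G^{(i_k)},G^{(i_k)}]=G^{(i_k+1)}$ and $a_k^x=a_km_k$. The core step is to absorb each factor $m_k$ modulo $N$, one position at a time. Fix such a $k$ and set $\mathbf{i}'=(i_1,\dots,i_{k-1},i_k+1,i_{k+1},\dots,i_n)$. Since $\mathbf{i}'>\mathbf{i}$, the inclusion $w(\mathbf{i}')\le N$ is immediate from the definition of $w(\mathbf{i^+})$ as a product over all $\mathbf{j}>\mathbf{i}$. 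Writing $A_j=G^{(i_j)}N/N$ and $M=G^{(i_k+1)}N/N$, this says precisely that $w(A_1,\dots,A_{k-1},M,A_{k+1},\dots,A_n)=1$ in $\overline{G}$, so the hypothesis of Lemma~\ref{factor goes to modulus} is met.

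Throughout the procedure every entry at position $j$ lies in $G^{(i_j)}$: this holds initially for $a_j^x$ (as $G^{(i_j)}$ is normal) and is preserved when an entry is switched to $a_j$. Consequently each entry's image lies in $A_j$, and Lemma~\ref{factor goes to modulus}, applied in $\overline{G}$ with the element $\overline{a_k^x}=\overline{a_k}\,\overline{m_k}$ at position $k$, replaces it by $\overline{a_k}$ while leaving the remaining entries untouched. Carrying out this replacement for every $k$ with $i_k\le s$ turns $\overline{w(a_1^x,\dots,a_n^x)}$ into $\overline{w(\overline{a}_1,\dots,\overline{a}_n)}$, which is exactly the asserted congruence.

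The only delicate point, and the step I expect to require the most care, is the bookkeeping: one must observe that the hypothesis of Lemma~\ref{factor goes to modulus} constrains only the ambient subgroups $A_j$ and $M$ and not the particular elements occupying the other positions, so it stays valid after earlier positions have already been modified, and one must confirm that the invariant ``the $j$-th entry lies in $G^{(i_j)}$'' persists across all the substitutions. Beyond verifying this invariant and the inclusion $w(\mathbf{i}')\le w(\mathbf{i^+})$, I anticipate no genuine obstacle.
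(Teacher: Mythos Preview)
Your proposal is correct and follows essentially the same approach as the paper's proof: write $w(a_1,\dots,a_n)^x=w(a_1^x,\dots,a_n^x)$, note that for $i_k\le s$ one has $a_k^x=a_k[a_k,x]$ with $[a_k,x]\in G^{(i_k+1)}$, and then invoke Lemma~\ref{factor goes to modulus} to remove these perturbations modulo $w(\mathbf{i^+})$. The paper compresses the quotient passage and the iteration over positions into a single sentence, whereas you spell out the normality of $w(\mathbf{i^+})$, the per-coordinate application in $G/w(\mathbf{i^+})$, and the invariant that the $j$-th entry stays in $G^{(i_j)}$; this extra bookkeeping is sound and indeed needed for a fully rigorous argument.
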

\begin{proof} We have   $ w(a_1,\dots,a_n)^x= w(a_1^x,\dots,a_n^x)$.
Assume that $ i_k \le s$. Then $a_k^x=a_k[a_k,x]$ and $[a_k,x]\in [G^{(i_k)},G^{(i_k)}]\le G^{(i_k+1)}$. Now an application of 
 Lemma
\ref{factor goes to modulus} with  $m_k=[a_k,x]$ completes the proof.
\end{proof}

\begin{cor}\label{cor:ab}
 Let $w=w(x_1,\ldots,x_n)$ be a \mc\  and $\mathbf{i} \in I$. If $ w(\mathbf{i^+})=1$, then $ w(\mathbf{i})$ 
 is abelian.  
\end{cor}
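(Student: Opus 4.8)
The plan is to show that any two of the generating $w$-values of $w(\mathbf{i})$ commute; since $w(\mathbf{i})$ is generated by these values, that forces it to be abelian. Recall that $w(\mathbf{i})$ is generated by the elements $w(a_1,\dots,a_n)$ with $a_j\in G^{(i_j)}$, and put $m=\max\{i_1,\dots,i_n\}$. The two ingredients I would combine are Lemma \ref{conj} and the observation that every such generating $w$-value already lies in the single derived subgroup $G^{(m)}$.

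First I would record the auxiliary fact: if $b_j\in G^{(i_j)}$ for $j=1,\dots,n$, then
\[
w(b_1,\dots,b_n)\in G^{(m)},\qquad m=\max\{i_1,\dots,i_n\}.
\]
This is proved by induction on the number of variables via the recursive decomposition $w=[\varphi,\psi]$, with $\varphi$ on $x_1,\dots,x_r$ and $\psi$ on $x_{r+1},\dots,x_n$. Letting $m_1$ and $m_2$ denote the maxima of the indices attached to the variables of $\varphi$ and $\psi$, the inductive hypothesis places the two inner values in $G^{(m_1)}$ and $G^{(m_2)}$; since the terms of the derived series are normal, one has
\[
[G^{(m_1)},G^{(m_2)}]\le G^{(m_1)}\cap G^{(m_2)}=G^{(\max(m_1,m_2))}=G^{(m)},
\]
which closes the induction. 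The base case $n=1$ is immediate.

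With this in hand, the main step applies Lemma \ref{conj}. Let $u=w(a_1,\dots,a_n)$ and $v=w(b_1,\dots,b_n)$ be two generators of $w(\mathbf{i})$. By the auxiliary fact, $v\in G^{(m)}$, so I would invoke Lemma \ref{conj} with $x=v$ and $s=m$. Since $i_k\le m=s$ for every $k$, the lemma yields $\overline{a}_k=a_k$ for all $k$, whence
\[
u^{v}=w(a_1,\dots,a_n)^{v}\equiv w(a_1,\dots,a_n)=u\pmod{w(\mathbf{i^+})}.
\]
As $w(\mathbf{i^+})=1$ by hypothesis, the congruence becomes the equality $u^{v}=u$, i.e.\ $[u,v]=1$. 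Since arbitrary generators of $w(\mathbf{i})$ commute, $w(\mathbf{i})$ is abelian.

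The argument is short once the two ingredients are in place, and I do not anticipate a genuine difficulty; the only point requiring care is the index bookkeeping in the auxiliary fact, namely checking that a $w$-value with arguments in $G^{(i_1)},\dots,G^{(i_n)}$ cannot escape $G^{(m)}$. This rests entirely on the normality of the derived subgroups, which forces $[G^{(a)},G^{(b)}]\le G^{(\max(a,b))}$, and it is precisely this bound that makes the choice $s=m$ in Lemma \ref{conj} large enough to return every variable unchanged.
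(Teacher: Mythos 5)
Your proof is correct and follows essentially the same route as the paper: both take two generating $w$-values, observe that the second lies in $G^{(s)}$ with $s=\max\{i_1,\ldots,i_n\}$, and apply Lemma \ref{conj} so that every $\overline{a}_k=a_k$, forcing the two values to commute modulo $w(\mathbf{i^+})=1$. The only difference is that you spell out the auxiliary containment $w(b_1,\dots,b_n)\in G^{(s)}$, which the paper asserts without proof.
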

\begin{proof} 
Let $a_j, b_j \in G^{(i_j)}$ for every  $j=1,\ldots,n$. Since  $w(\mathbf{i^+})=1$ and 
$w(b_1,\dots,b_n)\in G^{(s)}$ where $s= \max \{i_1, \ldots , i_n\}$, by Lemma \ref{conj} it follows that 
$[  w(a_1,\dots,a_n), w(b_1,\dots,b_n)]=1$, and the result  follows. 
\end{proof}

 \begin{proposition}
\label{skew congruence}
 Let $w=w(x_1,\ldots,x_n)$ be a \mc\ on  $n$ variables and  
let $\mathbf{i} \in I$. 
For every $j=1,\ldots,n$, consider an element $a_j\in G^{(i_j)}$, and
for a fixed value $k$, let $b_{k}\in G^{(i_k)}$.
If $ w(\mathbf{i^+})=1$, then
\begin{eqnarray*}
& w(a_1,\ldots,a_{k-1},b_{k} a_{k},a_{k+1},\ldots,a_{n}) 
= \\
& w(\overline a_1,\ldots,\overline a_{ k-1},b_{ k},\overline a_{{ k}+1},\ldots,\overline a_{n})
w (a_1,\ldots,a_{{ k}-1},a_{ k},a_{{ k}+1},\ldots,a_{n}),
\end{eqnarray*}
where 
 $\overline a_j$ is a conjugate of $a_j$ and moreover 
$\overline a_j=a_j$  if $i_j \leq i_k$. 
\end{proposition}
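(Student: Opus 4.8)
The plan is to argue by induction on the number $n$ of variables of $w$. When $n=1$ we have $w(x_1)=x_1$ and the asserted identity reads $b_ka_k=b_k\cdot a_k$, which is trivial. For $n>1$ write $w=[\varphi,\psi]$ with $\varphi=\varphi(x_1,\dots,x_r)$ and $\psi=\psi(x_{r+1},\dots,x_n)$ multilinear commutators on fewer variables, and set $\mathbf{i}'=(i_1,\dots,i_r)$. By Corollary \ref{cor:ab} the subgroup $w(\mathbf{i})$ is abelian, and every $w$-value occurring below lies in $w(\mathbf{i})$ (conjugates of the $a_j$ stay in the normal subgroups $G^{(i_j)}$). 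I would treat the case $k\le r$ in detail; the case $k>r$ is handled by expanding the second entry via $[x,yz]=[x,z][x,y]^z$, which produces the two factors in the opposite order, after which one swaps them using that $w(\mathbf{i})$ is abelian. So assume $k\le r$, put $u=\varphi(a_1,\dots,a_r)$ and $v=\psi(a_{r+1},\dots,a_n)$, so that $w(a_1,\dots,b_ka_k,\dots,a_n)=[\varphi(a_1,\dots,b_ka_k,\dots,a_r),v]$.

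The main obstacle is that the inductive hypothesis for $\varphi$ requires the vanishing of $\varphi(\mathbf{i}'^+)$, whereas we are only given $w(\mathbf{i}^+)=1$. To get around this I would pass to the quotient $\bar G=G/\varphi(\mathbf{i}'^+)$. Since $\varphi(\mathbf{i}'^+)$ is a normal subgroup and $w(\mathbf{i}^+)=1$, in $\bar G$ one has both $\varphi(\bar{\mathbf{i}}'^+)=1$ and $w(\bar{\mathbf{i}}^+)=1$. Applying the proposition inductively to $\varphi$ in $\bar G$ and lifting back to $G$ yields
\[
\varphi(a_1,\dots,b_ka_k,\dots,a_r)=z\,P\,u,\qquad P=\varphi(\overline a_1,\dots,b_k,\dots,\overline a_r),
\]
where $z\in\varphi(\mathbf{i}'^+)$ and each $\overline a_j$ is a conjugate of $a_j$ equal to $a_j$ whenever $i_j\le i_k$ (I place the error term $z$ on the left, which is possible since $\varphi(\mathbf{i}'^+)$ is normal). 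The crucial point is that $v$ centralizes $\varphi(\mathbf{i}'^+)$: indeed $[\varphi(\mathbf{i}'^+),\psi(G^{(i_{r+1})},\dots,G^{(i_n)})]\le w(\mathbf{i}^+)=1$. Hence $[zPu,v]=[z,v]^{Pu}[Pu,v]=[Pu,v]$, and the error term disappears.

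It then remains to compute $[Pu,v]=[P,v]^u[u,v]$. The second factor is exactly $[u,v]=w(a_1,\dots,a_n)$, the required rightmost term. For the first factor, $[P,v]=w(\overline a_1,\dots,b_k,\dots,\overline a_r,a_{r+1},\dots,a_n)$ is a $w$-value, and $u=\varphi(a_1,\dots,a_r)$ lies in $G^{(s)}$ with $s=\max_{j\le r}i_j\ge i_k$. Applying Lemma \ref{conj} to $[P,v]^u$ rewrites it, modulo $w(\mathbf{i}^+)=1$, as a $w$-value with $b_k$ still in slot $k$ (because $i_k\le s$) and conjugates of the $a_j$ in the remaining slots. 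A final check of the bookkeeping shows these conjugates still equal $a_j$ whenever $i_j\le i_k$: for $j\le r$ this is inherited from the $\varphi$-step, and since $i_j\le s$ Lemma \ref{conj} leaves those slots untouched; for $j>r$, if $i_j\le i_k$ then $i_j\le i_k\le s$, so Lemma \ref{conj} again fixes the slot. This gives precisely the claimed identity.

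The hard part is the modulus mismatch addressed in the second paragraph: the right move is not to force $\varphi(\mathbf{i}'^+)$ to vanish, but to observe that it is centralized by $v$ and hence is swallowed by the commutator with $v$, provided the error term has been moved to the correct side of the product. The remaining difficulty is purely organizational, namely verifying through Lemma \ref{conj} that the conjugation affecting a slot is trivial exactly when $i_j\le i_k$, which rests on the inequality $\deg u=\max_{j\le r}i_j\ge i_k$.
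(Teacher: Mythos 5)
Your proof is correct and follows essentially the same route as the paper's: induction on $n$ via the decomposition $w=[\varphi,\psi]$, with the error term from the inductive step absorbed because $[\varphi(\mathbf{i}_1^+),\psi(\mathbf{i}_2)]\le w(\mathbf{i}^+)=1$, and Lemma \ref{conj} handling the conjugation bookkeeping. The only cosmetic differences are that you make the quotient $G/\varphi(\mathbf{i}_1^+)$ explicit where the paper simply records the error term $z$, and you apply Lemma \ref{conj} to the whole $w$-value $[P,v]^u$ at once where the paper splits it as $[u_1^{u_2},v^{u_2}]$ and treats the two conjugates separately via Corollary \ref{cor:ab} and Lemma \ref{conj}.
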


\begin{proof}
 We argue by induction on  $n$.
If $n=1$,  then the result is obvious so we assume that $n\ge 2$.

For short, we will write $\ol a$ to indicate an element which is conjugate to $a$  
 in $G$.
We can write $w(a_1,\dots,a_n)=[\varphi(a_1,\dots,a_r),\psi(a_{r+1},\dots,a_n)]$ where 
  $\varphi$ and $\psi$ are \mc s.  
 Moreover, 
  we write $\mathbf{i}=(\mathbf{i}_1,\mathbf{i}_2)$ where 
 $\mathbf{i}_1=(i_1,\ldots,i_r)$  and $\mathbf{i}_2=(i_{r+1},\ldots,i_n)$. 

Assume first that $k \le r$. Let 
\[
u =\varphi(a_1,\dots, b_{ k} a_{ k},\ldots,a_r),
\]
and
\[
v =\psi(a_{r+1},\dots,a_n).
\]
As $r <n$, we can apply the inductive hypothesis to conclude  that 
there exists an element $z \in \varphi(\mathbf{i}_1^+)$ such that 
\[
u = u_1u_2 z,
\]
with
\begin{eqnarray*}
u_1 &=& \varphi(\overline a_1,\ldots,\overline a_{ k-1},b_{ k},\overline a_{ k+1},\ldots,\overline a_{r}) \\
u_2 &=&\varphi(a_1,\ldots,a_{ k-1},a_{ k},a_{ k+1},\ldots,a_{r}),
\end{eqnarray*}
where 
$\overline a_j=a_j$   if $i_j \le i_k$,  for  $j=1,\dots,r$. 

Since  $   [ \varphi(\mathbf{i}_1^+), \psi(\mathbf{i}_2)] \le  w(\mathbf{i}^+)=1$  and  $z \in \varphi(\mathbf{i}_1^+)$,  
 it follows from Lemma \ref{factor goes to modulus} applied to the word $[x_1, x_2]$ that
\begin{equation}\label{firstcase}
[u,v]=[u_1u_2, v].
\end{equation}

Observe that 
\begin{equation*}
[u_1u_2,v] = [u_1,v]^{u_2} [u_2,v] = [u_1^{u_2},v^{u_2}] [u_2,v]. 
\end{equation*}
 By Corollary \ref{cor:ab} 
 \[u_1^{u_2} \equiv u_1  \pmod{ \varphi(\mathbf{i}_1^+)},\]
 and by Lemma \ref{conj}, since $u_2 \in G^{(i_k)}$,  
 \[v^{u_2} \equiv \psi(\overline a_{r+1},\dots,\overline a_n) \pmod{ \psi(\mathbf{i}_2^+)}\]
where 
  $\overline a_j=a_j$   
   if  $ i_j \le i_k$.
Therefore, as $   [ \varphi(\mathbf{i}_1^+), \psi(\mathbf{i}_2)]  [ \varphi(\mathbf{i}_1), \psi(\mathbf{i}_2^+)] \le  w(\mathbf{i}^+)=1$, we have 
\[
[u_1^{u_2},v^{u_2}]= 
[u_1,v^{u_2}]=\]
\[
[\varphi(\overline a_1,\ldots,\overline a_{ k-1},b_{ k},\overline a_{ k+1},\ldots,\overline a_{r}),
\psi(\overline a_{r+1},\dots,\overline a_n)]
=
\]
\[
w(\overline a_1,\ldots,\overline a_{ k-1},b_{ k},\overline a_{{ k}+1},\ldots,\overline a_{n}),
\]
hence  (\ref{firstcase}) becomes
\[
[u,v]= 
w(\overline a_1,\ldots,\overline a_{ k-1},b_{ k},\overline a_{{ k}+1},\ldots,\overline a_{n})
w (a_1,\ldots,a_{{ k}-1},a_{ k},a_{{ k}+1},\ldots,a_{n}),
\]
where 
 $\overline a_j=a_j$ whenever  $i_j \le i_k$,
and  this completes the proof in the case where $k \le r$. 

The proof of the  case   $k > r$ is similar. 
\end{proof}

\section{Proofs of the main results} 

We will first prove Theorems  \ref{theoa} and \ref{theob} for soluble groups. In this case, the result is stated 
in a more general setting.

Let $\Si$ be a property of profinite groups such that: 
\begin{enumerate}
\item Every finite group is a $\Si$-group;
\item The class of all  $\Si$-groups is closed under taking subgroups, quotients and 
extensions.
\end{enumerate}

For example $\Si$ can be the property of being of finite rank, of finite exponent, or periodic. The next lemma proves Theorems \ref{theoa} and \ref{theob}  in the soluble case.

\begin{lemma}\label{lem:sigma} Suppose that $w$ is a multilinear commutator word. Let $G$ be a soluble profinite group that has countably many subgroups  $G_1,G_2,\ldots$ whose union contains all $w$-values in $G$. If every subgroup $G_u$ has the property $\Si$, then also $w(G)$ has the property $\Si$.
\end{lemma}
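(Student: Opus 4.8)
The plan is to use the combinatorial machinery of Section~2 to reduce the statement to the study of the abelian sections $w(\mathbf{i})/w(\mathbf{i^+})$, and then to extract the property $\Si$ for each such section from the covering hypothesis by a Baire category argument. Since $G$ is soluble, $G^{(d)}=1$ for some $d$, and because $w$ is a \mc\ the subgroup $w(\mathbf{i})$ is trivial as soon as some coordinate $i_k\ge d$; hence only the finitely many $\mathbf{i}\in\{0,\dots,d-1\}^n$ give a nontrivial $w(\mathbf{i})$, and $w(\mathbf{0})=w(G)$. First I would record, via Lemma~\ref{conj} applied with $x\in G=G^{(0)}$, that each $w(\mathbf{i})$ (and hence each product $w(\mathbf{i^+})$) is normal in $G$. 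The proof then proceeds by downward induction along the finite poset of indices: assuming $w(\mathbf{j})\in\Si$ for all $\mathbf{j}>\mathbf{i}$, the subgroup $w(\mathbf{i^+})$, being a product of finitely many normal $\Si$-subgroups, lies in $\Si$ by closure under quotients and extensions, so it suffices to prove that the section $w(\mathbf{i})/w(\mathbf{i^+})$ belongs to $\Si$; closure under extensions then yields $w(\mathbf{i})\in\Si$, and the case $\mathbf{i}=\mathbf{0}$ gives the lemma.

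To treat a single section I would pass to the quotient $G/w(\mathbf{i^+})$. The covering and solubility hypotheses descend, since images of the $G_u$ still cover all $w$-values and still have $\Si$ (as $\Si$ is quotient-closed), and in the quotient the index $\mathbf{i}$ satisfies $w(\mathbf{i^+})=1$. Thus I may assume $w(\mathbf{i^+})=1$, so that $A:=w(\mathbf{i})$ is abelian by Corollary~\ref{cor:ab}, and the goal becomes to prove $A\in\Si$. For this, consider the continuous word map $f\colon D\to A$, $f(a_1,\dots,a_n)=w(a_1,\dots,a_n)$, on the compact space $D=G^{(i_1)}\times\cdots\times G^{(i_n)}$. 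The sets $f^{-1}(G_u)$ are closed and cover $D$, so by the Baire category theorem (a nonempty compact Hausdorff space is a Baire space) one of them, say $f^{-1}(G_{u_0})$, has nonempty interior and therefore contains a basic open box $\prod_{j}c_jN_j$, where each $N_j$ is an open subgroup of $G^{(i_j)}$; shrinking, I would take the $N_j$ to be the intersections of $G^{(i_j)}$ with a single open normal subgroup of $G$. In other words, all $w$-values with $a_j\in c_jN_j$ lie in $H:=A\cap G_{u_0}$.

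It remains to spread this local information, i.e. to prove that $H$ has finite index in $A$; granting this, $H\in\Si$ as a closed subgroup of $G_{u_0}$, the finite group $A/H$ is in $\Si$, and closure under extensions gives $A\in\Si$, completing the induction and hence the lemma. The spreading is driven by Proposition~\ref{skew congruence}: since $w(\mathbf{i^+})=1$ and $A$ is abelian, the proposition says that $w$ is additive in each variable modulo $H$, up to replacing the remaining arguments by $G$-conjugates. Writing $\pi\colon A\to A/H$ and $F=\pi\circ f$, the vanishing of $F$ on the box, combined with this additivity, should force $F$ to depend only on the cosets $a_jN_j$, so that $F$ factors through the finite group $\prod_j G^{(i_j)}/N_j$; and since a \mc\ takes the value $1$ whenever one argument is trivial, additivity then shows that every element of the image $F(D)$ has finite order. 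As $A/H$ is topologically generated by the finite set $F(D)$ and is abelian of finite exponent, it is finite, which is exactly the finite index we want.

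I expect the genuine difficulty to lie precisely in the conjugation twists $\overline a_j$ appearing in Proposition~\ref{skew congruence}: they replace arguments by conjugates and so a priori disturb both the factorisation of $F$ through cosets and the additivity on which the bounded-order conclusion rests. The way around this is the reason for choosing the $N_j$ to come from a single open normal subgroup of $G$: conjugation then merely permutes the finitely many cosets involved, so the relevant action factors through a finite group and the twists do not enlarge the (finite) image of $F$. Carrying out this bookkeeping carefully, so that the additivity furnished by Proposition~\ref{skew congruence} survives the presence of the $\overline a_j$, is the heart of the argument.
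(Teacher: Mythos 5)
Your global strategy (downward induction on the finite poset of tuples $\mathbf{i}$, reduction to the case $w(\mathbf{i^+})=1$ so that $A=w(\mathbf{i})$ is abelian, then a Baire category argument to extract the property $\Si$) matches the paper's, but the way you deploy Baire category contains a genuine gap, and it sits exactly at the point you yourself flag as ``the heart of the argument''. You apply Baire once to the product space $D=G^{(i_1)}\times\cdots\times G^{(i_n)}$, obtain a box $\prod_j c_jN_j$ whose $w$-values lie in a single $G_{u_0}$, and then hope to spread this over all of $D$ using the additivity of Proposition \ref{skew congruence}. But that proposition is additive only \emph{up to replacing the remaining arguments by conjugates} $\overline a_j$, where the conjugating elements are arbitrary (they are built from the other arguments in the inductive proof of the proposition). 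Taking the $N_j$ inside one open normal subgroup of $G$ only guarantees $\overline a_j\in a_j^{x}N_j$ for some $x$, i.e.\ that $\overline a_j$ lies in \emph{some} coset of $N_j$ --- not in the coset $c_jN_j$ you started from. Hence the twisted value $w(\overline a_1,\dots,a',\dots,\overline a_n)$ need not lie in $G_{u_0}$, the claimed additivity of $F$ modulo $H=A\cap G_{u_0}$ fails, and both the factorization of $F$ through $\prod_j G^{(i_j)}/N_j$ and the finite-order conclusion collapse. There is no obvious repair within your one-shot scheme, because the conjugates of $c_jN_j$ sweep out many cosets, each a priori controlled by a \emph{different} member of the countable family.

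The paper circumvents precisely this obstruction by two devices you do not use. First, it runs the Baire argument $n$ times, one variable at a time, fixing the variables in \emph{increasing} order of the derived lengths $i_{\pi(1)}\le\cdots\le i_{\pi(n)}$: when the variable in position $q=\pi(k+1)$ is treated, the already-fixed arguments are exactly those with $i_j\le i_q$, which Proposition \ref{skew congruence} guarantees are \emph{not} conjugated, while the still-free arguments range over the whole normal subgroups $G^{(i_j)}$ and therefore absorb any conjugation; this is what makes the inclusions $X(a'a)\subseteq X(a')X(a)$ and $X(a)\subseteq X(a')^{-1}X(a'a)$ valid. Second, since different choices of the fixed arguments land in different members of the countable family, the paper replaces your single subgroup $A\cap G_{u_0}$ by the increasing chain $Y_s=\prod_{u=1}^s(G_u\cap w(\mathbf{i}))$ of $\Si$-subgroups of the abelian group $w(\mathbf{i})$, so that the finitely many contributions arising from a transversal of the open subgroup produced by Baire can be multiplied together inside a single $Y_t$. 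To salvage your write-up you should restructure the Baire step along these lines rather than try to force the product-space version through.
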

\begin{proof} Let $n$ be the number of variables involved in $w$.  We will assume that $w(G)\neq1$. As $G$ is soluble, there exist only finitely many 
${\bf i}\in I$ such that $w({\bf i})\neq1$. The lemma will be proved by induction on the number of such tuples ${\bf i}$. 

Choose ${\bf i}=(i_1,\dots,i_n)\in I$ such that $w({\bf i})\neq1$ while $w({\bf j})=1$ whenever ${\bf i}<{\bf j}$. By Corollary \ref{cor:ab} it follows that  $w({\bf i})$ is abelian. We will now show that $w({\bf i})$ has  the property $\Si$. 

Let us consider the subgroups $Y_s$ obtained as a product of the intersections of the  first $s$ subgroups $G_u$ with  $w({\bf i})$:  
\[Y_s =\prod_{u=1}^s ( G_u \cap w({\bf i})).\] 
 Since $w({\bf i})$ is abelian and $\Si$ is closed  under taking subgroups 
 and extensions, 
 the subgroups $Y_s$ are $\Si$-groups and they cover the set of $w$-values lying in $w({\bf i})$. 
Moreover $Y_s\le Y_t$ if $s\le t$. These are the only properties of the subgroups $Y_s$ that will be used in what follows. 
 
Let $\pi$ be a permutation of the set $\{i_1,\dots,i_n\}$ satisfying the condition that $i_{\pi(1)}\leq i_{\pi(2)}\leq\dots\leq i_{\pi(n)}$. We wish to show that for every $k=0,1,\dots,n$ and every choice of $a_{\pi(1)}\in G^{(i_{\pi(1)})}$, $a_{\pi(2)}\in G^{(i_{\pi(2)})}$, $\dots$, $a_{\pi(k)}\in G^{(i_{\pi(k)})}$ there exists an index $s$ such that every $w$-value $w(x_1,\dots,x_n)$, where $x_j\in G^{(i_j)}$ and $x_j=a_j$ whenever $j\in \{\pi(1),\dots,\pi(k)\}$, is contained in  $Y_s$. This will be shown by induction on $n-k$.
 
If $n-k=0$, there is only one element $w(x_1,\dots,x_n)$ of the required form, namely, $w(a_1,\dots,a_n)$. So we only need to show that the element $w(a_1,\dots,a_n)$ belongs to some of the subgroups $Y_j$, which it true by the hypothesis. Thus, we assume 
that $n-k\geq 1$ and  that the elements $a_{\pi(1)}\in G^{(i_{\pi(1)})}$, $a_{\pi(2)}\in G^{(i_{\pi(2)})}$, $\dots$, $a_{\pi(k)}\in G^{(i_{\pi(k)})}$ are fixed. According to the induction hypothesis for every choice of 
$a=a_{\pi(k+1)}\in G^{(i_{\pi(k+1)})}$ there exists an index $m_a$ 
(depending on the choice of $a$) such that the set 
\[X(a) =\left\{ w(x_1,\dots,x_n) \mid x_j\in G^{(i_j)}, \ x_j=a_j \textrm{ if } j\in \{\pi(1),\dots,\pi(k+1)\}\right\}\]
is contained in  $Y_{m_a}$. Of course, the set of all such $w$-values is also contained in the abelian subgroup $w({\bf i})$.

Proposition \ref{skew congruence} implies that for any $a,a'\in G^{(i_{\pi(k+1)})}$ we have 
\begin{eqnarray}
\label{X(a'a)}X(a'a)&\subseteq& X(a')X(a);\\
\label{X(bh)}X(a)&\subseteq& X(a')^{-1}X(a'a).
\end{eqnarray} 
For example, to prove the inclusion in (\ref{X(bh)}) we set 
  $\pi(k+1)=q$ and we take an element $w(a_1,\dots,a_{q-1},a,a_{q+1},\dots,a_n)\in X(a)$.
 Then by Proposition \ref{skew congruence}  we have:
\begin{eqnarray*}
& w(a_1,\ldots,a_{q-1},a'a,a_{q+1},\ldots,a_{n}) 
= \\
& w(\overline a_1,\ldots,\overline a_{q-1},a',\overline a_{{q}+1},\ldots,\overline a_{n})
w (a_1,\ldots,a_{{q}-1},a,a_{q+1},\ldots,a_{n}),
\end{eqnarray*}
where $\overline a_j$ is a conjugate of $a_j$ and moreover 
$\overline a_j=a_j$  if $i_j \leq i_q$. As $q=\pi(k+1)$, we have $i_j \leq i_q$ 
whenever $j\in \{\pi(1),\dots,\pi(k),\pi(k+1)\}$, \linebreak
so $w(\overline a_1,\ldots,\overline a_{q-1},a',\overline a_{{q}+1},\ldots,\overline a_{n})\in X(a')$.
Now (\ref{X(bh)}) follows.

For every integer $t$ we denote by $S_t$ the set of all possible $a\in G^{(i_{\pi(k+1)})}$ such that  $X(a) \subseteq Y_t$.
 The sets $S_t$ are closed sets and they cover $G^{(i_{\pi(k+1)})}$. So by  Baire category theorem   \cite[p.\ 200]{ke} there exist an index $m$, 
an element  $b \in G^{(i_{\pi(k+1)})}$ and an open normal subgroup $H\leq G^{(i_{\pi(k+1)})}$ such that for every $h\in H$ 
 we have  $X(bh)\subseteq Y_m $. 
 By (\ref{X(bh)}) we have 
$X(h)\subseteq X(b)^{-1}X(bh)$, so $X(h) \subseteq Y_m$   for every $h\in H$.   

Let $b_1,\ldots,b_l$ be a transversal of $H$ in $G^{(i_{\pi(k+1)})}$ and let $m_j=m_{b_j}$ be the indexes such that $X(b_j) \subseteq Y_{m_j}$ for $j=1,\dots,l$.
 Take an arbitrary element $g\in G^{(i_{\pi(k+1)})}$ and write $g=b_jh$ for suitable $b_j\in\{b_1,\ldots,b_l\}$ and $h\in H$; 
 then $X(g)\subseteq X(b_j)X(h)\subseteq  Y_{m_j}  Y_m \subseteq Y_{m_j+ m}.$ 
%Since $w({\bf i})$ is abelian and $\Si$ is extension closed, 
  We conclude that  for every $g\in G^{(i_{\pi(k+1)})}$ the set $X(g)$ is contained in the $\Si$-subgroup  
  $Y_{m_1+ \dots+ m_l +m}$and the inductive step is complete.

Thus, indeed for every $k=0,1,\dots,n$ and every choice of $a_{\pi(1)}\in G^{(i_{\pi(1)})}$, $a_{\pi(2)}\in G^{(i_{\pi(2)})}$, 
$\dots$, $a_{\pi(k)}\in G^{(i_{\pi(k)})}$ there exists an index $s$ such that every $w$-value $w(x_1,\dots,x_n)$, where $x_j\in G^{(i_j)}$ 
and $x_j=a_j$ whenever $j\in \{\pi(1),\dots,\pi(k)\}$, belong to $Y_s$. In the case where $k=0$ this means that $w({\bf i})\le Y_s,$ 
for an index $s$, and so $w({\bf i})$  has 
the property $\Si$.

 We can now pass to the quotient $G/w({\bf i})$: as the property $\Si$  is closed under taking  quotients, the group  
  $G/w({\bf i})$ satisfies the assumption of the lemma. 
 The induction on the number of ${\bf j}\in I$ such that $w({\bf j})\neq1$ leads us to the conclusion that 
 $w(G/w({\bf i}))$  satisfies  property $\Si$. As the property $\Si$  is  extension closed,  $w(G)$  satisfies property $\Si$.
 The proof is now complete.
\end{proof}

We will use without any further reference two more results. The first one is well-known (see for example \cite[Lemma 4.1]{S2}). 
\begin{lemma}\label{lem:delta_k}
Let $G$ be a group and  let $w$ be a multilinear commutator word on $n$ variables.  
Then each $\de_n$-value is a $w$-value.
\end{lemma}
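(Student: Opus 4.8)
The plan is to argue by induction on $n$, the number of variables of $w$. When $n=1$ the only \mc\ is $w=x_1$, whose values are all of $G$; since every $\de_1$-value $[a,b]$ is in particular an element of $G$, it is a $w$-value, so the base case holds. For $n\ge 2$ I would write $w=[\varphi,\psi]$, where $\varphi$ is a \mc\ on some $r$ variables and $\psi$ is a \mc\ on the remaining $n-r$ variables, with $1\le r\le n-1$; by the definition of a \mc\ these use disjoint sets of variables. The inductive hypothesis applies to both $\varphi$ and $\psi$, giving that every $\de_r$-value is a $\varphi$-value and every $\de_{n-r}$-value is a $\psi$-value.

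To exploit this I need the auxiliary fact that every $\de_m$-value is a $\de_r$-value whenever $r\le m$. This follows from the composition identity
\[
\de_m(x_1,\ldots,x_{2^m})=\de_r\bigl(\de_{m-r}(x_1,\ldots,x_{2^{m-r}}),\ldots,\de_{m-r}(x_{2^m-2^{m-r}+1},\ldots,x_{2^m})\bigr),
\]
in which $\de_r$ is evaluated on the $2^r$ blockwise $\de_{m-r}$-values, each block consisting of $2^{m-r}$ consecutive variables. One proves this by a short induction on $r$: the case $r=0$ is trivial, and for the inductive step one uses $\de_r=[\de_{r-1},\de_{r-1}]$ together with the inductive form of the identity for $r-1$ and $m-1$ to recombine the two halves into $\de_m=[\de_{m-1},\de_{m-1}]$. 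The identity exhibits an arbitrary $\de_m$-value as a $\de_r$-value under the special substitution in which each of the $2^r$ arguments of $\de_r$ is itself a $\de_{m-r}$-value.

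Now take an arbitrary $\de_n$-value. By the recursion $\de_n=[\de_{n-1},\de_{n-1}]$ it has the form $[d_1,d_2]$, where $d_1$ and $d_2$ are $\de_{n-1}$-values built on disjoint sets of variables. Since $r\le n-1$ and $n-r\le n-1$, the auxiliary fact shows that $d_1$ is a $\de_r$-value and $d_2$ is a $\de_{n-r}$-value; the inductive hypothesis then writes $d_1=\varphi(\ldots)$ as a $\varphi$-value and $d_2=\psi(\ldots)$ as a $\psi$-value. Because $\varphi$ and $\psi$ involve disjoint variables, substituting these elements into $w=[\varphi,\psi]$ is legitimate and yields $w(\ldots)=[\varphi(\ldots),\psi(\ldots)]=[d_1,d_2]$, the chosen $\de_n$-value. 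This completes the inductive step.

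The only genuinely computational point, and the step I expect to require the most care, is the composition identity for the derived words together with the bookkeeping that keeps all the variable blocks disjoint, so that each substitution really produces an admissible value of the word in question. Everything else is a direct unwinding of the recursive definitions of $\de_n$ and of the decomposition $w=[\varphi,\psi]$.
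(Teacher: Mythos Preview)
Your argument is correct and is essentially the standard proof of this well-known fact. The paper itself does not supply a proof of this lemma at all; it merely states the result and cites \cite[Lemma 4.1]{S2} as a reference, so your write-up fills in exactly the details that the paper omits.
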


\begin{lemma}\label{lem:torsion}
A periodic virtually soluble profinite group $G$ is locally finite and has finite exponent.  
\end{lemma}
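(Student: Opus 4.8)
The plan is to reduce the statement to Zelmanov's theorem on periodic compact groups, which asserts that every periodic profinite group is locally finite. First I would observe that since $G$ is periodic and profinite, Zelmanov's result immediately gives that $G$ is locally finite. Thus the substantive content of the lemma lies entirely in the claim that $G$ has \emph{finite exponent}, and it is here that the hypothesis of virtual solubility will do its work.

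To obtain finite exponent, I would first pass to an open normal soluble subgroup $N$, which exists because $G$ is virtually soluble; since $G$ is compact, $N$ has finite index, say $|G:N|=d$. The key reduction is that a profinite group has finite exponent if and only if it has an open subgroup of finite exponent: indeed, if $N$ has exponent $e$, then for any $g\in G$ one has $g^d\in N$ (after replacing $N$ by its core, which remains open and normal of finite index), so $g^{de}=1$ and $G$ has exponent dividing $de$. Hence it suffices to prove that the periodic soluble profinite group $N$ has finite exponent. I would handle $N$ by induction on its derived length: the abelian case is the base, and for the inductive step one considers the extension $1\to N'\to N\to N/N'\to 1$, where $N'$ has finite exponent by induction and $N/N'$ is an abelian periodic profinite group.

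The heart of the argument is therefore the abelian case: an abelian periodic profinite group has finite exponent. Here I would invoke Herfort's theorem, which guarantees that only finitely many primes $p_1,\dots,p_r$ divide the orders of elements of $G$. Combined with the structure of abelian profinite groups as products over the primes of their Sylow (pro-$p$) components, periodicity forces each pro-$p_i$ component to be a periodic abelian pro-$p_i$ group; such a group is an abelian compact torsion group, and by Prüfer-type structure theory (or directly, since a periodic procyclic $p$-group is finite) it must have finite exponent $p_i^{e_i}$. The exponent of the abelian part is then the product $\prod p_i^{e_i}$, which is finite. Splicing the abelian exponent bound through the finitely many steps of the derived series of $N$, and then through the finite index $d$, yields a finite exponent for $G$.

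The main obstacle I anticipate is the careful bookkeeping in the extension-closedness of ``finite exponent'' for profinite groups: unlike in the abstract setting, one must verify that an extension of a finite-exponent profinite group by another finite-exponent profinite group again has finite exponent, and in the nonabelian case this is genuinely delicate because $g^{e_1}\in N'$ need not imply $g^{e_1 e_2}=1$ when $N'$ is nonabelian. I expect the cleanest route around this is to note that since $N$ is soluble and periodic, Herfort's finiteness of primes applies to $N$ as well, and then to combine Zelmanov's local finiteness with the standard fact that a locally finite profinite group of finite exponent is precisely one in which the set of element orders is bounded; boundedness of orders then follows from the finitely many primes together with the finite derived length, allowing one to conclude finite exponent without needing full extension-closedness of the exponent in the nonabelian case.
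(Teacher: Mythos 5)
Your proposal is correct and follows essentially the same route as the paper, whose entire proof consists of two sentences: the abelian case (a periodic abelian profinite group has finite exponent) is quoted as well known, and the general case follows by induction on the derived length of an open normal soluble subgroup. Two remarks. First, the ``main obstacle'' you anticipate at the end is illusory: the property of having finite exponent is extension-closed with no delicacy at all, since if $N\trianglelefteq G$ with $N$ of exponent $e_1$ and $G/N$ of exponent $e_2$, then for any $g\in G$ the single element $g^{e_2}$ lies in $N$, whence $(g^{e_2})^{e_1}=g^{e_2e_1}=1$; no commutativity of $N$ is used. So your last paragraph, and with it the appeal to Zelmanov, can be discarded --- local finiteness also follows elementarily here, because periodic soluble groups are locally finite (induction on derived length, using that local finiteness is extension-closed) and a (locally finite)-by-finite group is locally finite. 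Second, for the abelian base case your detour through Herfort's theorem and the pro-$p$ decomposition is heavier than necessary and the step ``a periodic abelian pro-$p$ group has finite exponent by Pr\"ufer-type structure theory'' is the one place you are vague; the standard argument (and presumably the content of the exercise in Wilson's book that the paper cites) is Baire category: the closed subgroups $\{g\in G: g^{n!}=1\}$ cover $G$, so one of them is open and hence of finite index.
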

\begin{proof} The fact that a periodic abelian profinite group has finite exponent (and therefore is locally finite) is well-known (see  for instance Exercise 10 of Chapter 2 in \cite{book:wilson}).  The general case follows easily by induction on the derived length of an open soluble normal subgroup of $G$. 
\end{proof}

In the next lemma we use same arguments as in \cite[Theorem 1.1]{AS}. 
For the reader's convenience we provide the proof.

\begin{lemma}\label{lem:reduction}
In both Theorem \ref{theoa} and Theorem \ref{theob} it is sufficient to assume that $G$ is virtually soluble. 
% Then $w(G)$ is locally finite.
\end{lemma}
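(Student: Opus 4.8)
The plan is to follow the line of \cite[Theorem 1.1]{AS}, adapting it from finitely many to countably many covering subgroups. Assume that Theorems \ref{theoa} and \ref{theob} are already available for virtually soluble profinite groups, and let $G$ be an arbitrary profinite group satisfying the hypothesis of one of them. The obstruction to virtual solubility lives in the nonabelian composition factors of $G$, so the reduction comes down to showing that the covering hypothesis tolerates only a bounded amount of nonabelian simple content; once this is isolated and shown to be harmless, $G$ is seen to be virtually prosoluble and, as explained below, the proof can be carried out assuming $G$ virtually soluble.

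The key input is that $w(S)=S$ for every nonabelian finite simple group $S$: indeed $w(S)$ is normal in $S$, and $w(S)=1$ would force $S^{(n)}=\delta_n(S)\le w(S)=1$ by Lemma \ref{lem:delta_k}, making $S$ soluble. Hence in any Cartesian product $V=\prod_\lambda S_\lambda$ of nonabelian finite simple groups the $w$-values generate $V$ and are spread across all coordinates. I would use the structure theory of profinite groups (Wilson's results \cite{wilson-torsion}, exactly as in \cite{AS}) to show that if $G$ has infinitely many nonabelian chief factors then it possesses a section isomorphic to such an infinite product $V$. Intersecting the countably many covering subgroups with $V$ and running the Baire category argument as in the proof of Lemma \ref{lem:sigma}, I would localise the covering of the $w$-values of $V$ on an open subset to finitely many of the $G_i$, thereby reducing to the finitely-many-subgroups theorem of \cite{AS}. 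For Theorem \ref{theoa} the contradiction arises when the factors have unbounded exponent, since then $V$ carries $w$-values of infinite order that can lie in no periodic subgroup (the bounded-exponent products are already locally finite by Zelmanov and pose no threat); for Theorem \ref{theob} an infinite product has infinite rank, and an open piece of its generating set of $w$-values trapped inside a finite-rank subgroup would generate an open, hence finite-index, subgroup of finite rank, which is impossible. In either case $G$ has only finitely many nonabelian chief factors and is therefore virtually prosoluble.

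It then remains to pass from virtually prosoluble to virtually soluble, and here I would again exploit Lemma \ref{lem:delta_k}: the inclusions $\delta_n(G)\le w(G)$ tie the derived series of an open prosoluble normal subgroup of $G$ to the covering data, and combining this with the periodicity, respectively the finite-rank, information should bound the relevant derived length. The main obstacle I anticipate is precisely this last passage together with the faithful transfer of the covering to the semisimple section in a form to which Baire category applies: the periodic subgroups $G_i$ need not be closed, so in the finite-rank case one may replace them by their closures, but in the periodic case one must instead run Baire on the closed sets of elements whose $w$-value data lands in the first $s$ subgroups, as is done for the sets $S_t$ in Lemma \ref{lem:sigma}, and invoke Zelmanov's local finiteness of periodic profinite groups. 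Making the abundance of $w$-values in $V$ quantitatively strong enough to genuinely violate periodicity, respectively the finite-rank bound, is the crux, and is exactly where the machinery of \cite[Theorem 1.1]{AS} is used.
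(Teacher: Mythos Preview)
Your route diverges substantially from the paper's, and as written it has genuine gaps.

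The paper does not go through structure theory or Cartesian products of simple groups at all, and in particular does not pass through ``virtually prosoluble''. It applies the Baire category theorem directly to the closed sets
\[
S_i=\{(x_1,\ldots,x_{2^k})\in G^{2^k}\mid \delta_k(x_1,\ldots,x_{2^k})\in G_i\}
\]
(here the $G_i$ are closed by the paper's standing convention, so your worry about non-closed covering subgroups is moot). This yields an open normal $H\le G$, elements $a_1,\ldots,a_{2^k}$ and an index $j$ with $\delta_k(a_1H,\ldots,a_{2^k}H)\subseteq G_j$. Letting $K$ be generated by these values, one has $K\le G_j$ a $\Sigma$-group normalised by $H$; after factoring out the (finite) product of $G$-conjugates of $K\cap H$ and then the finite normal closure of $K$, one reaches $\delta_k(a_1H,\ldots,a_{2^k}H)=1$, whence $H$ is soluble by \cite[Lemma 2.2]{AS}. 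This is exactly the argument of \cite[Theorem 1.1]{AS}; that paper does not use Wilson's structure theory here either, so your phrase ``exactly as in \cite{AS}'' misidentifies the method.

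Your proposed alternative has two real problems. First, the contradiction inside an infinite product $V=\prod S_\lambda$ is not established: the covering is only of $w$-values, the intersections $G_i\cap V$ need not multiply to subgroups since $V$ is not abelian, and you have not produced specific $w$-values (as opposed to arbitrary elements) of infinite order or whose Baire-localised family generates an open subgroup of $V$. Second, the passage from virtually prosoluble to virtually soluble is only asserted (``should bound the relevant derived length''); nothing in the hypotheses obviously forces a prosoluble open subgroup to have stabilising derived series without further argument. The paper's approach sidesteps both issues by landing directly on a soluble open subgroup in one stroke.
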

\begin{proof}
Assume that Theorem \ref{theoa} (resp. Theorem \ref{theob}) holds for virtually soluble groups. 
Let $\Si$ be the property of being periodic (resp. of being of finite rank).
 Let $w$ be a multilinear commutator word and $G$ a profinite group that has countably many $\Si$-subgroups $G_1, G_2, \ldots$ whose union contains all $w$-values in $G$. We wish to prove that these assumptions imply that $w(G)$ has the property $\Sigma$.

Let $k$  be the number of variables involved in $w$, so that every $\delta_k$-value is a $w$-value. For each positive integer $i$ let 
\[ S_i = \{ (x_1, \ldots, x_{2^k}) \in G \times \cdots \times G \mid 
 \delta_k(x_1, \ldots, x_{2^k}) \in G_i \}.\]

Note that the sets $S_i$ 
are closed in $ G \times \cdots \times G $
and cover the whole of $ G \times \cdots \times G $.
 By Baire category theorem at least one of
these sets contains a non-empty interior. Hence, there exist an open
subgroup $H$ of $G$,   elements 
$a_1, \ldots  , a_{2_k} \in G$  
and an integer $j$  such that
\[\delta_k(a_1H, \ldots, a_{2^k}H) \subseteq G_j.\]

Without loss of generality we can assume that the subgroup $H$ is normal in $G$. Let $K$ be the subgroup of $G$ generated by all commutators of the form $\delta_k(a_1h_1, \ldots, a_{2_k}h_{2^k}) $ where $h_i \in H$. Note that
$K \le  G_j$ and that $H$ normalizes $K$.
 Since $G_j$  is a  $\Sigma$-group, so is $K.$
 Let $D=K \cap H$.  Then $D$ is a normal  $\Sigma$-subgroup of
 $H$ and the normalizer of $D$ in $G$
has finite index. Therefore there are only finitely many conjugates of
$D$ in $G$.
Let $D =D_1, D_2 , \ldots , D_r$
be all these conjugates. All of them are normal in
$H$ and so their product
$D_1 D_2 \cdots  D_r$ is a  $\Sigma$-group. 
By passing to the quotient
$G/(D_1 D_2 \cdots  D_r)$ 
we may assume that $D = 1.$ Since
$D=K\cap H$
and $H$
has finite index in $G$, it follows that
$K$ is finite.
On the other hand, the normalizer of
$K$ has finite index in
$G$ and so the normal closure, say
$L,$ of $K$ in $G$ is also finite. We can pass to
the quotient group $G/L$ and assume that $K= 1.$ In that case we have
 \[\delta_k(a_1H, \ldots, a_{2^k}H) =1.\]
 Now by \cite[Lemma 2.2]{AS} 
the subgroup $H$
is soluble and $G$ is virtually soluble.  Our assumptions imply 
that $w(G)$ has the property $\Si$, as required.
\end{proof}

\begin{lemma}  \label{distribute} Assume that $G$ is a group and $M$ is a normal abelian subgroup of $G$. If $G/M$ is generated by $b_1M,\dots,b_sM$,  
then \[[M,G,\dots,G]=\prod [M,b_{i_1},\dots,b_{i_k}]\]
 where  $G$ appears $k$  times and the product is taken over all possible choices of $ b_{i_1}, \ldots , b_{i_{k}}\in\{ b_1, \ldots b_s \}$. 
\end{lemma}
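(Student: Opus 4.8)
The plan is to prove the identity
\[
[M,\underbrace{G,\dots,G}_{k}]=\prod [M,b_{i_1},\dots,b_{i_k}]
\]
by induction on $k$, where on the right-hand side the product ranges over all tuples $(b_{i_1},\dots,b_{i_k})\in\{b_1,\dots,b_s\}^k$. The base case $k=1$ is the heart of the matter: I would first show that
\[
[M,G]=\prod_{i=1}^{s}[M,b_i].
\]
For this I want to exploit that $M$ is abelian and normal, so that $G$ acts on $M$ and the commutator map $m\mapsto[m,g]$ behaves additively in $m$. The key point is that each generator $b_iM$ of $G/M$ together with the elements of $M$ generate $G$, and I would use the standard commutator identity $[m,xy]=[m,y]\,[m,x]^y$ to reduce an arbitrary $[m,g]$ to a product of terms $[m,b_i]^{\pm}$ conjugated by elements of $G$. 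Because $M$ is normal, every conjugate $[m,b_i]^g$ again lies in $M$; because $M$ is abelian, the subgroups $[M,b_i]$ are normalized by $M$, and I must verify they are also normalized by each $b_j$ (equivalently by $G$), so that the product $\prod_i [M,b_i]$ is actually a normal subgroup of $G$ and hence contains all conjugates and all products in any order.

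The crucial observation making the base case work is that $\prod_{i=1}^s [M,b_i]$ is $G$-invariant. I would establish this by noting that $[M,b_i]^{b_j}\subseteq [M,G]$ and then showing, via the Hall--Witt type identities together with $[M,G,M]=1$ (which holds since $M$ is abelian), that conjugating $[M,b_i]$ by $b_j$ keeps us inside $\prod_i[M,b_i]$. Once the product is known to be normal in $G$, the reverse inclusion $\prod_i[M,b_i]\subseteq[M,G]$ is immediate, and the forward inclusion $[M,G]\subseteq\prod_i[M,b_i]$ follows because every element of $G$ is, modulo $M$, a word in the $b_i$, and commutators $[m,g]$ expand through $[m,xy]=[m,y][m,x]^y$ into products of $G$-conjugates of the $[m,b_i]$, all of which lie in the normal subgroup $\prod_i[M,b_i]$.

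For the inductive step I set $N=[M,\underbrace{G,\dots,G}_{k-1}]$, which is a normal subgroup of $G$ contained in the abelian group $M$, and apply the case $k=1$ to $N$ in place of $M$ to obtain $[N,G]=\prod_{i=1}^s[N,b_i]$. Using the inductive description of $N$ as a product of the $[M,b_{i_1},\dots,b_{i_{k-1}}]$, I would expand each $[N,b_i]$. Here I again lean on $M$ being abelian: since all the factors of $N$ lie in the abelian group $M$, the commutator $[\,\cdot\,,b_i]$ distributes over the product, giving
\[
[N,b_i]=\prod[M,b_{i_1},\dots,b_{i_{k-1}},b_i],
\]
and collecting over all $i$ reproduces the full product over length-$k$ tuples.

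The step I expect to be the main obstacle is verifying that $\prod_i[M,b_i]$ (and, in the inductive step, $[N,b_i]$) is normal in $G$, i.e.\ closed under conjugation by the remaining generators, and that the commutator brackets genuinely distribute over products of elements of $M$. Both hinge on the hypothesis that $M$ is abelian, which kills the ``inner'' correction terms of the form $[M,G,M]$; without abelianness these identities acquire extra factors that do not obviously lie in the claimed product. I would therefore be careful to invoke $M$ abelian at exactly the points where $[m,xy]=[m,y][m,x]^y$ is simplified and where conjugates $[m,b_i]^{b_j}$ are reabsorbed into the product.
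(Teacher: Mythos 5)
Your proof is correct and follows essentially the same route as the paper: induction on $k$, with the base case handled by the identity $[m,xy]=[m,y]\,[m,x]^y$ together with the observation that $[M,b_i]^{b_j}\le [M,b_i][M,b_j]$ (which is exactly what makes the product $\prod_i[M,b_i]$ absorb all the conjugates), and the inductive step obtained by applying the case $k=1$ to the $(k-1)$-fold commutator $H=[M,G,\dots,G]$ and distributing $[\,\cdot\,,b_i]$ over products via $[h_1h_2,g]=[h_1,g][h_2,g]$, valid since $H\le M$ is abelian. The paper's proof is just a terser version of the same argument.
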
 

\begin{proof} The proof is by induction on $k$. If $k=1$, the result follows from  the 
fact that  $[x,yz]=[x,z]\,[x,y]^z$ for all $x,y,z \in G$, taking into account that $[M,b_i]^{b_j}\le  [M,b_i][M,b_j]$.     
Suppose that $k>1$ and argue by induction on $k$. Set $H=[M,G,\dots,G]$, where $G$ appears $k-1$ times. By induction \[H=\prod [M,b_{i_1},\dots,b_{i_{k-1}}],\] where the product is taken over all choices of $ b_{i_1}, \ldots,b_{i_{k-1}}\in\{ b_1, \ldots b_s \}$. Now use the case $k=1$ applied to $[H,G]$ and the fact that $[h_1h_2,g]=[h_1,g][h_2,g]$ 
for every $h_1,h_2\in H$ and $g\in G$, as $H\le M$ is abelian.
\end{proof}

\begin{lemma}\label{perfect}
 Let $G$ be a finitely generated perfect profinite group and let $w$ be a multilinear commutator word. Then
 every element of $G$ is the product of finitely many $w$-values.
\end{lemma}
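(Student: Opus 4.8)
The plan is to reduce the statement to the special case $w=\delta_n$, where $n$ is the number of variables of $w$, and then to control the width of $\delta_n$ by induction, feeding in the Nikolov--Segal theorem on commutator width at each step. First I would record that, $G$ being perfect and finitely generated profinite, the (closed) derived subgroups satisfy $G^{(k)}=G$ for every $k\ge 0$: indeed $G'=G$, and iterating gives $G^{(k+1)}=[G^{(k)},G^{(k)}]=[G,G]=G$. By Lemma \ref{lem:delta_k} every $\delta_n$-value is a $w$-value, so a product of finitely many $\delta_n$-values is a product of finitely many $w$-values; hence it suffices to prove that every element of $G$ is a product of boundedly many $\delta_n$-values.

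I would then prove by induction on $k$ that there is an integer $N_k$ such that every element of $G$ is a product of at most $N_k$ $\delta_k$-values. For $k=0$ take $N_0=1$, since $\delta_0=x_1$. For the inductive step, recall that $\delta_k=[\delta_{k-1}(x_1,\ldots),\delta_{k-1}(\ldots)]$, so every $\delta_k$-value is a commutator $[u,v]$ of two $\delta_{k-1}$-values. By the Nikolov--Segal theorem on the width of the commutator word in finitely generated profinite groups, there is a constant $C$ (depending only on the number of generators of $G$) such that $[G,G]$ is the set of products of at most $C$ commutators; since $G=[G,G]$, every $g\in G$ can be written as $g=\prod_{i=1}^{C}[a_i,b_i]$ with $a_i,b_i\in G$. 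By the induction hypothesis each $a_i$ and each $b_i$ is a product of at most $N_{k-1}$ $\delta_{k-1}$-values.

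The final move is to expand each factor $[a_i,b_i]$ by means of the identities $[xy,z]=[x,z]^y[y,z]$ and $[x,yz]=[x,z][x,y]^z$ (the same ones used in Lemma \ref{distribute}); this writes $[a_i,b_i]$ as a product of at most $N_{k-1}^2$ conjugates of commutators $[u,v]$ with $u,v$ being $\delta_{k-1}$-values. Each such $[u,v]$ is a $\delta_k$-value, and since $\delta_k$ is a word, every conjugate $\delta_k(\,\cdot\,)^x=\delta_k(\,\cdot^x)$ of a $\delta_k$-value is again a $\delta_k$-value. Thus $g$ is a product of at most $N_k:=C\,N_{k-1}^2$ $\delta_k$-values, and taking $k=n$ completes the argument.

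The only non-elementary ingredient, and the real heart of the proof, is the Nikolov--Segal result that the commutator word has finite width in a finitely generated profinite group; everything else is the bookkeeping of commutator identities together with the fact that conjugates of $\delta_k$-values are $\delta_k$-values. The one point demanding care is to keep the number of factors uniformly bounded along the induction, which is guaranteed precisely because the Nikolov--Segal constant $C$ and each $N_{k-1}$ are constants independent of the chosen element $g$.
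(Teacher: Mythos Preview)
Your proof is correct and essentially mirrors the paper's: both invoke Nikolov--Segal to write every element of the perfect group $G$ as a product of finitely many commutators, then expand each commutator via the identities $[xy,z]=[x,z]^y[y,z]$ and $[x,yz]=[x,z][x,y]^z$ and use induction on the length of the word. The only cosmetic differences are that you first reduce $w$ to $\delta_n$ via Lemma~\ref{lem:delta_k} and then induct on $k$ (whereas the paper inducts directly on the number of variables of $w$ by writing $w=[w_1,w_2]$), and that you keep an explicit bound $N_k=C\,N_{k-1}^2$ on the number of factors, which is not required for the lemma as stated but is a harmless strengthening.
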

\begin{proof}
The proof is by induction on the number $n$ of variables involved in $w$. If $n=1$ then the result is obvious. Assume that $n\geq 2$ and write $w=[w_1,w_2]$ where $w_1$ and $w_2$ are multilinear commutator words on less variables  than $w$. As $G$ is finitely generated, the theorem of Nikolov and Segal \cite{NS} tells us that the derived subgroup of $G$ coincides with the abstract subgroup of $G$ generated by commutators. Since $G$ is perfect, it follows that every $g\in G$ is the product of finitely many commutators, i.e.
\[g=\prod_{i=1}^n[a_i,b_i] \]
for some $a_i,b_i\in G$. By induction, each $a_i$ (resp. $b_i$) is the product of finitely
many $w_1$-values (resp. $w_2$-values). Using the well-known commutator identities $[xy,z]=[x,z]^y\,[y,z]$, $[x,yz]=[x,z]\,[x,y]^z$ we can decompose each commutator $[a_i,b_i]$ as a product of finitely many $w$-values, so the lemma follows.
\end{proof}

\begin{lemma}\label{lem:unico}
Let $k$ be a positive integer. Suppose that a profinite group $G$ contains a normal abelian subgroup $M$ such that $G/M$ is perfect and finitely generated. Then every element of $[M,G]$ is a product of finitely many $\de_k$-values, each lying in $[M,G]$.
\end{lemma}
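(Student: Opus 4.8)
The plan is to induct on $k$. Throughout I would use that $[M,G]\le M$ is abelian (since $M$ is) and normal in $G$, so $M$ acts trivially by conjugation on every subgroup of $M$; this lets me discard $M$-factors and turn products into sums freely inside $[M,G]$. For the base case $k=1$, Lemma~\ref{distribute} with a single copy of $G$ gives $[M,G]=\prod_i[M,b_i]$, where $b_1M,\dots,b_sM$ generate the finitely generated group $G/M$. Since each $[M,b_i]=\{[m,b_i]:m\in M\}$ consists of commutators, that is, of $\de_1$-values lying in $[M,G]$, every element of $[M,G]$ is a product of at most $s$ such values.

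The pivotal structural fact, and the place where perfectness of $G/M$ enters, is the identity
\[
[M,G]=[M,G,G].
\]
To prove it I would look at the central factor $V=[M,G]/[M,G,G]$: the group $G$ acts trivially on $V$, so for each fixed $m\in M$ the assignment $g\mapsto[m,g]$ induces a continuous homomorphism from $G$ into the abelian group $V$ that kills $M$, hence factors through the topological abelianization of $G/M$. As $G/M$ is perfect this homomorphism is trivial, so all commutators $[m,g]$ die in $V$; since they generate $V$ we get $V=1$. Iterating yields $[M,\underbrace{G,\dots,G}_{t}]=[M,G]$ for every $t\ge1$. Taking $t=2$ in Lemma~\ref{distribute} then gives
\[
[M,G]=\prod_{i_1,i_2}[M,b_{i_1},b_{i_2}],
\]
and each factor equals $\{[\eta,b_{i_2}]:\eta\in[M,b_{i_1}]\}$, a set of single commutators $[\eta,b_{i_2}]$ with $\eta\in[M,b_{i_1}]\subseteq[M,G]$. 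Thus every $z\in[M,G]$ is a product of finitely many commutators $[\eta,b_i]$ whose left entries now lie in $[M,G]$ — this is the crucial gain over the base case.

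For the inductive step I would process each $[\eta,b_i]$ with $\eta\in[M,G]$. Since $G/M$ is perfect and finitely generated, Lemma~\ref{perfect} lets me write $b_i\equiv d_1\cdots d_p\pmod M$ with each $d_l$ a $\de_{k-1}$-value; expanding by $[x,yz]=[x,z]\,[x,y]^z$ and absorbing $M$-factors reduces everything—after conjugations, which keep the left entry inside the normal subgroup $[M,G]$ and send $\de_{k-1}$-values to $\de_{k-1}$-values—to commutators $[\eta',d]$ with $\eta'\in[M,G]$ and $d$ a $\de_{k-1}$-value. Here the induction hypothesis applies to $\eta'$: it is a product $e_1\cdots e_r$ of $\de_{k-1}$-values lying in $[M,G]$, so $[\eta',d]=\prod_j[e_j,d]$ (the conjugations being trivial, as both $[e_j,d]$ and the conjugators lie in $M$). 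Finally each $[e_j,d]=[\de_{k-1}(\mathbf u_j),\de_{k-1}(\mathbf y)]$ is by definition a $\de_k$-value, lying in $[M,G]$ because $e_j\in[M,G]\le M$. Assembling these factorizations writes $z$ as a product of finitely many $\de_k$-values in $[M,G]$.

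The main obstacle is the identity $[M,G]=[M,G,G]$: without it one only reaches commutators with left entries in $M$ rather than in $[M,G]$, and the induction cannot be fed. The rest is bookkeeping — checking that conjugations preserve both membership in the normal subgroup $[M,G]$ and the property of being a $\de_{k-1}$- or a $\de_k$-value. Because Lemma~\ref{distribute} and Lemma~\ref{perfect} produce genuine finite products rather than merely topological generation, the resulting factorization of $z$ is an honest finite product of $\de_k$-values. One should of course read all commutator subgroups as closed subgroups in the profinite setting, but the homomorphism argument establishing $[M,G]=[M,G,G]$ is continuous and goes through unchanged.
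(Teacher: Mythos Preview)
Your argument is correct, but it takes a longer route than the paper's. The paper does not induct on $k$. Instead it invokes Lemma~\ref{perfect} at the outset to choose finitely many $\delta_{k-1}$-values $b_1,\ldots,b_n$ with $G=\langle b_1,\ldots,b_n\rangle M$, establishes $[M,G]=[M,\underbrace{G,\ldots,G}_{k}]$ (via the Three Subgroups Lemma rather than your homomorphism argument, though to the same effect), and then applies Lemma~\ref{distribute} with $k$ copies of $G$ to get $[M,G]=\prod[M,b_{i_1},\ldots,b_{i_k}]$. Since each left-normed commutator $[m,b_{i_1},\ldots,b_{i_k}]$ with all $b_{i_j}$ chosen as $\delta_{k-1}$-values is itself a $\delta_k$-value (an easy inner induction: $[m,b_{i_1}]$ is a $\delta_1$-value, $[[m,b_{i_1}],b_{i_2}]$ a $\delta_2$-value, and so on, using that a $\delta_{k-1}$-value is in particular a $\delta_j$-value for every $j\le k-1$), the conclusion follows in one stroke. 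Your scheme reaches the same destination by first writing elements of $[M,G]$ as products of commutators $[\eta,b_i]$ with $\eta\in[M,G]$, then decomposing $b_i$ into $\delta_{k-1}$-values via Lemma~\ref{perfect}, and finally decomposing $\eta$ via the induction hypothesis; this is sound, but the paper's choice of generators that are already $\delta_{k-1}$-values eliminates the outer induction on $k$ altogether and makes the argument shorter.
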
 
\begin{proof} As $G/M$ is perfect and finitely generated, by Lemma \ref{perfect} there exist finitely many $\de_{k-1}$-values, say $b_1, \ldots b_n$, such that $G=\langle b_1,\ldots,b_n\rangle M$.

Since $M$ is normal and abelian, by the Tree Subgroups Lemma
 we have $[G,M]=[G,G,M]\leq [M,G,G]$,  so $[M,G]=[M,G,G]$. It follows that  $[M,G]=[M, G, \ldots ,G]$ where 
 $G$ is taken $k$ times. By Lemma \ref{distribute}
 we have
\[[M,G]  = \prod [M, b_{i_1}, \ldots , b_{i_{k}}] \] 
 where the product is over all possible choices of $ b_{i_1}, \ldots , b_{i_{k}}$ in $\{ b_1, \ldots b_n \}$. 
Then, as $M$ is abelian, 
\[[M, b_{i_1}, \ldots , b_{i_{k}}]= \{ [m, b_{i_1}, \ldots , b_{i_{k}}] \mid m\in M \}\]
 and it is easy to see that each element $[m, b_{i_1}, \ldots , b_{i_{k}}]$ is a $\de_k$-value  lying in $[M,G]$ (cf. Lemma 2.3 in \cite{S1}).   
\end{proof}

\begin{lemma}\label{lem:baire}
Let $G$ be profinite group covered by countably many subgroups $S_1, S_2, \ldots $. Then at least one of them is open.
\end{lemma}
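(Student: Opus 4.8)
The plan is to combine the Baire category theorem---already invoked earlier in the paper via \cite{ke}---with the elementary fact that a subgroup of a topological group having nonempty interior must be open. Recall that throughout the paper ``subgroup'' means \emph{closed} subgroup, so each $S_i$ is a closed subset of $G$, and the hypothesis says $G=\bigcup_{i\ge 1}S_i$ is a countable union of closed sets.

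First I would observe that $G$, being profinite, is compact and Hausdorff, hence locally compact and Hausdorff, so the Baire category theorem applies to it. Since $G$ is covered by the countably many closed sets $S_i$, not all of them can have empty interior; thus some $S_j$ contains a nonempty open set $U$. The next step is to upgrade ``nonempty interior'' to ``open'': pick any $u\in U\subseteq S_j$, and note that $u^{-1}U$ is an open neighbourhood of the identity contained in $S_j$, because $u^{-1}\in S_j$ and $U\subseteq S_j$. Consequently, for every $s\in S_j$ the translate $s\,(u^{-1}U)$ is an open neighbourhood of $s$ contained in $S_j$, so $S_j=\bigcup_{s\in S_j}s\,(u^{-1}U)$ is open.

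The only point that deserves a word of justification is that $G$ is a Baire space, but this is immediate from its compactness (it is even the same form of the Baire category theorem used in the proof of Lemma~\ref{lem:sigma}). Everything else is routine manipulation in a topological group, so I do not anticipate a genuine obstacle here; the lemma is essentially a packaging of these two standard facts.
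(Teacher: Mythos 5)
Your argument is correct and is essentially the paper's own proof, which simply invokes the Baire category theorem to find some $S_i$ with nonempty interior and then notes that a subgroup with nonempty interior is open; you have merely spelled out the standard translation argument that the paper leaves implicit. No issues.
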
 
\begin{proof}
By Baire category theorem, there is a subgroup $S_i$ with nonempty interior.
 This is an open subgroup.
%{(see% also \cite[Lemma 0.3.1]{book:wilson})}
\end{proof}

We are now in the position to prove Theorems \ref{theoa} and \ref{theob}. 

\begin{proof}[Proof of Theorem \ref{theoa}]
 Let $w$ be a multilinear commutator word and $G$ a profinite group that has countably many periodic subgroups $G_1, G_2, \ldots$ whose union contains all $w$-values in $G$. We wish to prove that $w(G)$ is locally finite.
 By Lemma \ref{lem:reduction}, it is sufficient to prove this in the case where $G$ is virtually soluble. Thus, we assume that $G$ is virtually soluble.

 By Lemma 2.4 of \cite{AS2} the derived series of $G$ has only finitely many terms, that is, $G^{(i)}=G^{(i+1)}$ for some positive integer $i$. It is sufficient to prove that $G^{(i)}$ is locally finite, because then factoring it out we get the case where $G$ is soluble and the result is immediate from Lemma \ref{lem:sigma}. So by replacing $G$ with $G^{(i)}$ we can assume that $G$ is a perfect virtually soluble profinite group. We will show that $G$ is locally finite.

Among all open normal soluble subgroups of $G$ we choose $N$  with minimal derived length, say $d$. 
The proof is by induction on $d$. If $d=0$, then $G$ is finite and there is nothing to prove.
So let $d>0$.
Let $M$ be the last nontrivial
term of the derived series of $N$. By induction $\ol G=G/M$ is locally finite.
 For short, if $X$ is a
 subgroup of $G$  then $\ol X$  will denote its image in $\ol G$.

Let $T$ be a subgroup of $G$ with the property that $\ol T$ is a minimal finite subgroup of 
$\ol G$ such that $\ol G=\ol N\ol T$. Note that such $T$ exists because $\ol G/\ol N$ is finite and
$\ol G$ is locally finite.
Since $G$ is perfect it follows that $NT=NT'$, hence, by minimality of   $T$, we deduce that $\ol T$ is perfect.

Also, $\ol G$ is the normal closure  of $\ol T$ in $\ol G$ because $\ol G/\ol T^{\ol G}$ is a perfect group which is isomorphic to a quotient of the soluble group $\ol N$.

By {a profinite version of} Lemma \ref{lem:delta_k}, there exists an integer $k$ such that every $\de_k$-value is a $w$-value. We apply Lemma \ref{lem:unico} and deduce that every element of $[M,T]$ is a product of finitely many $\delta_k$-values,
each lying in $[M,T]$. For every integer $i$ define  
\[S_i = \prod_{j=1}^i (G_j \cap [M,T]).\]
We see that countably many periodic subgroups $S_i$ cover $[M,T]$. By  Lemma \ref{lem:baire} at least one of them is open in $[M,T]$. As the class of periodic groups is closed under taking extensions, we deduce that $[M,T]$ is an abelian periodic group. Hence by Lemma \ref{lem:torsion} we obtain that  $[M,T]$ has finite exponent.

It follows that the normal closure of $[M,T]$ in $G$ has finite exponent. Factoring it out, we may assume that $M$ centralizes $T$. So $M$ centralizes the normal closure of $T$ and as $G=T^GM$ it follows that $M$ is contained in the center $Z(G)$ of $G$. By induction $G/M$ is locally finite and so by Lemma \ref{lem:torsion} it has finite exponent. Hence, $G/Z(G)$ has finite exponent as well. A theorem of Mann \cite{M} states that if $B$ is a finite group such that $B/Z(B)$ has exponent $e$, then the exponent of $B'$ is bounded by a function depending on $e$ only. Applying a profinite version of this theorem we deduce that the exponent of $G'=G$ is finite. In particular $G$ is locally finite, as  required. 
\end{proof}

\begin{proof}[Proof of Theorem \ref{theob}]
 Let $w$ be an multilinear commutator word and $G$ a profinite group that has countably many finite rank subgroups $G_1, G_2, \ldots$ whose union contains all $w$-values in $G$. We wish to prove that $w(G)$ has finite rank. As in the proof of Theorem \ref{theoa}, without loss of generality we assume that $G$ is perfect and virtually soluble. We will show that $G$ has finite rank.

Among all open normal soluble subgroups of $G$ we choose $N$ with minimal derived length $d$. The proof is by induction on $d$. If $d=0$, the result is obvious. So we assume that $d \ge 1$ and let $M$ be the last nontrivial term of the derived series of $N$. By induction $G/M$ has finite rank. In particular, $G/M$ is finitely generated. Let $k$ be the  number of variables involved in $w$. By Lemma \ref{lem:unico}, every element of $[M,G]$ is a product of finitely many $\de_k$-values, each lying in $[M,G]$.
For every integer $i$ define  $S_i = \prod_{j=1}^i (G_j \cap [M,G])$ and note that each subgroup $S_i$ has finite rank. 
Since by Lemma \ref{lem:delta_k}  every $\delta_k$-value is a $w$-value,  $[M,G]$  is covered by the subgroups $S_1, S_2, \ldots $
 and by  Lemma \ref{lem:baire} one of them is open, hence $[M,G]$ has finite rank. 

Passing to the quotient $G/[M,G]$, we can assume that $[M,G]=1$. As $G/M$ has finite rank, we deduce that $G/Z(G)$ has finite rank. 
 By a result of Mann and Lubotzky \cite{powerful} (see also \cite{rank}), this implies that $G'$ has finite rank. Since $G$ is perfect, the proof is complete. 
\end{proof}

We will now give the proof of Theorem \ref{theoc}. First, we need a modification of Lemma \ref{lem:sigma}.
\begin{lemma}\label{prop:finite-order} Suppose that $w$ is a multilinear commutator word. Let $G$ be a soluble profinite group in which every $w$-value has finite order. Then $w(G)$ has finite exponent.
\end{lemma}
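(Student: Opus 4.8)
The plan is to mimic the structure of the proof of Lemma \ref{lem:sigma}, replacing the abstract property $\Si$ by the concrete requirement that $w(G)$ have finite exponent, and to exploit the fact that the hypothesis ``every $w$-value has finite order'' is a stronger local condition than being covered by $\Si$-subgroups. First I would reduce, exactly as in Lemma \ref{lem:sigma}, to analysing the subgroups $w(\mathbf{i})$. Since $G$ is soluble, only finitely many $\mathbf{i}\in I$ satisfy $w(\mathbf{i})\neq 1$, so I argue by induction on the number of such tuples. Picking $\mathbf{i}$ maximal with $w(\mathbf{i})\neq 1$ and $w(\mathbf{i^+})=1$, Corollary \ref{cor:ab} gives that $w(\mathbf{i})$ is abelian. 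The goal becomes: show that the abelian subgroup $w(\mathbf{i})$ has finite exponent, after which passing to $G/w(\mathbf{i})$ and invoking the inductive hypothesis together with the fact that a finite-by-(finite exponent) profinite group has finite exponent completes the argument.

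The heart of the matter is therefore to bound the exponent of the abelian group $w(\mathbf{i})$. Here the key observation is that, because $w(\mathbf{i})$ is abelian and generated by $w$-values of finite order, one can try to show that those generating $w$-values have \emph{boundedly} finite order. To that end I would fix the permutation $\pi$ ordering the $i_{\pi(1)}\le\dots\le i_{\pi(n)}$ as in Lemma \ref{lem:sigma}, and run the same downward induction on $n-k$, but now tracking exponents rather than membership in the $Y_s$. The congruences (\ref{X(a'a)}) and (\ref{X(bh)}) coming from Proposition \ref{skew congruence}, valid since $w(\mathbf{i^+})=1$, show that inside the abelian group $w(\mathbf{i})$ the sets $X(a)$ behave additively: $X(a'a)\subseteq X(a')X(a)$. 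I would then replace the sets $S_t$ of Lemma \ref{lem:sigma} by the closed sets $E_t=\{a\in G^{(i_{\pi(k+1)})}\mid x^t=1 \text{ for all } x\in X(a)\}$, which cover $G^{(i_{\pi(k+1)})}$ precisely because every $w$-value has finite order. By the Baire category theorem one of these contains an open coset $bH$, and the additive behaviour of the $X(a)$ transfers a uniform exponent bound from $H$ to all of $G^{(i_{\pi(k+1)})}$ by running over a finite transversal. This yields, at the end of the induction ($k=0$), a single integer $s$ with $x^s=1$ for every $w$-value $x\in w(\mathbf{i})$; since $w(\mathbf{i})$ is abelian and generated by these values, $w(\mathbf{i})$ itself has exponent dividing $s$.

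The main obstacle I anticipate is making the exponent-bookkeeping interact cleanly with the conjugates $\overline a_j$ that appear in Proposition \ref{skew congruence}. In Lemma \ref{lem:sigma} the property $\Si$ is conjugation-stable and the sets $Y_s$ are built to be increasing, so membership propagates effortlessly; for exponents one must check that a conjugate of a $w$-value in $w(\mathbf{i})$ has the same order, which is automatic since conjugation is an automorphism, but one must ensure the conjugating elements do not force passage outside the set of elements whose order is being controlled. Because $w(\mathbf{i})$ is abelian and (by Lemma \ref{conj}) normal behaviour modulo $w(\mathbf{i^+})=1$ is trivial, the factor $w(\overline a_1,\dots,b_k,\dots,\overline a_n)$ is itself a $w$-value and hence lies in $E_t$ for the appropriate $t$, so the bound survives. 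The additive closure from (\ref{X(a'a)}) then gives $X(b_jh)\subseteq X(b_j)X(h)$, whence the exponent on $X(b_jh)$ divides the least common multiple (or, additively in the abelian group, the sum) of the exponents on $X(b_j)$ and $X(h)$; taking the finitely many $b_j$ in a transversal produces a single global bound. Once $w(\mathbf{i})$ is shown to have finite exponent, the induction and the extension-closure of the class of finite-exponent profinite groups finish the proof.
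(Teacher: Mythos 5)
Your proposal is correct and follows essentially the same route as the paper: the paper defines $Y_s$ to be the subgroup of the abelian group $w(\mathbf{i})$ generated by the $w$-values of order dividing $s$ (so that $Y_s$ has exponent dividing $s$) and then reruns the Baire-category induction of Lemma \ref{lem:sigma} with $\Si$ the property of having finite exponent, which is exactly your argument with the sets $E_t$ unpacked. The only bookkeeping point in both versions is to make the bounds combine multiplicatively (orders of products in an abelian group divide the product of the orders, not the sum), which is harmless.
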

\begin{proof}
We follow the same arguments as in the proof of Lemma \ref{lem:sigma}. 
Choose ${\bf i} \in I$ such that $w({\bf i}) \neq 1$  while $w({\bf j})=1$ 
whenever ${\bf i}<{\bf j}$. By Corollary \ref{cor:ab} it follows that  $w({\bf i})$ is abelian. We define $Y_s$ to be the subgroup generated by the $w$-values in $w({\bf i})$ whose order divides $s$: 
\[   Y_s =\langle w(x_1, \ldots, x_n) \in w({\bf i}) \mid w(x_1, \ldots, x_n)^s=1 \rangle. \] 
Since  $w({\bf i})$ is abelian, every element in $Y_s$ has order dividing $s$, hence $Y_s$ has finite exponent. 
Then we  argue as in  Lemma \ref{lem:sigma}, where $\Si$ is the property of having finite exponent, to deduce  that  $w({\bf i})$ has finite exponent. 

Remark that the quotient $G/w({\bf i})$ satisfies the hypothesis of the lemma. We can therefore pass to the quotient $G/w({\bf i})$. By induction on the number of ${\bf j}\in I$ such that $w({\bf j})\neq1$ 
 we obtain that  $w(G/w({\bf i}))$ has finite exponent. Thus  $w(G)$  has finite exponent and the proof is now complete.
\end{proof}

\begin{proof}[Proof of Theorem \ref{theoc}]
By Lemma \ref{prop:finite-order} the result holds
when $G$ is soluble. Now we have to prove it when  $G$ is virtually soluble.
This situation was in fact considered in the course of the proof of  Theorem \ref{theoa}. It is easy to see that only obvious modifications of the proof of Theorem \ref{theoa} are required now. Thus, the result follows.
\end{proof}

\noindent{\bf{Acknowledgments}}
We thank Gustavo Fern\'andez-Alcober for his contribution to the proofs of some results given in Section 2.

We also thank the funding agencies MIUR for partial support of the first two authors and CAPES and CNPq for support of the third author.


\begin{thebibliography}{99}

\bibitem{AS}  C.\ Acciarri, P.\ Shumyatsky, On profinite groups in which commutators are covered by 
finitely many subgroups, Math. Z. 274 (2013) 239--248.

\bibitem{as}  C.\ Acciarri, P.\ Shumyatsky, On verbal subgroups in finite and profinite groups,
 Algebra Discrete Math. 14 (2012) 1--13.

\bibitem{AS2}  C.\ Acciarri,  P.\ Shumyatsky,  On words that are concise in residually 
finite groups, DOI: 10.1016/j.jpaa.2013.04.018, Journal of Pure and Appl. Algebra, to appear.

\bibitem{CN}  G.\ Cutolo, C.\ Nicotera, Verbal sets and cyclic coverings, J. Algebra 324 
(2010) 1616--1624.
\bibitem{FS}  G.A.\ Fern\'andez-Alcober, P.\ Shumyatsky, On groups in which commutators 
are covered by finitely many cyclic subgroups, J. Algebra 319 (2008) 4844--4851.

\bibitem{he}  W.\ Herfort, Compact torsion groups and
finite exponent, Arch. Math.  33 (1979) 404--410.

\bibitem{ke}  J.L.\ Kelley, {General topology},
Van Nostrand, Toronto - New York - London, 1955. 

\bibitem{rank} L.A.\ Kurdachenko, P.\ Shumyatsky, The ranks of central factor and 
commutator groups,
Math. Proc. Cambridge Philos. Soc. 154 (2013) 63--69.

\bibitem{powerful} A.\ Lubotzky,  A.\ Mann,  Powerful p-groups. I. Finite groups,
J. Algebra 105 (1987) 484--505.


\bibitem{M}A.\ Mann,  { The exponent of central factors and commutator groups},
J. Group Theory 10 (2007) 435--436.
%

\bibitem{NS} N.\ Nikolov,  D.\ Segal, On finitely generated profinite groups. I. 
Strong completeness and uniform bounds,
Ann. of Math. (2) 165 (2007) 171--238. 


\bibitem{ribes-zal}  L.\ Ribes,   P.\ Zalesskii, Profinite groups, A Series of Modern Surveys in Mathematics,
40. Springer-Verlag, Berlin, 2000. 


\bibitem{RS} J.R.\ Rog\'erio, P.\ Shumyatsky, A finiteness condition for verbal subgroups, 
J. Group Theory 10 (2007) 811--815.


\bibitem{S1} P. Shumyatsky, On the exponent of a verbal subgroup in a finite group,
J. Aust. Math. Soc. 93 (2012) 325--332.

\bibitem{S2} P. Shumyatsky, Verbal subgroups in residually finite groups, Q.J. Math. 51
(2000) 523--528.


\bibitem{wilson-torsion} J.S.\ Wilson, On the Structure of Compact Torsion Groups,
Monatsh. Math. 96 (1983) 57--66.
%
\bibitem{book:wilson} J.S.\ Wilson,  Profinite Groups,
Clarendon Press, Oxford, 1998.

\bibitem{z} E.I.\ Zelmanov,
On periodic compact groups,
Israel J. Math. 77 (1992) 83--95. 


\end{thebibliography}
\end{document}